\documentclass{amsart}
\date{18 Jan, 2011}
\usepackage{amsmath,amsthm,amssymb,mathrsfs}
\title[Character tables of association schemes]
{Character tables of association schemes based on attenuated
spaces}
\author[H.~Kurihara]{Hirotake Kurihara}
\thanks{The author is supported by JSPS Research Fellowship.}
\address{Mathematical Institute, Tohoku University, Aoba 6-3, Sendai
980-8578, Japan}
\email{sa9d05@math.tohoku.ac.jp}
\subjclass[2010]{Primary~05E30, Secondary~20C15}
\keywords{association scheme; character table; attenuated space.}

\numberwithin{equation}{section}
\newtheorem{thm}{Theorem}[section]
\newtheorem*{thm*}{Theorem}
\newtheorem{lem}[thm]{Lemma}
\newtheorem{prop}[thm]{Proposition}
\newtheorem{cor}[thm]{Corollary}
\newtheorem*{clm}{Claim}
\theoremstyle{definition}
\newtheorem{df}[thm]{Definition}
\theoremstyle{remark}

\def\set#1#2{\{#1\,|\,#2\}}
\newcommand{\trans}[1]{{}^t\hspace{-0.4ex}#1}

\newcommand{\X}{\mathcal{X}}
\newcommand{\Y}{\mathcal{Y}}

\newcommand{\fp}{\mathfrak{p}}
\newcommand{\fq}{\mathfrak{q}}

\newcommand{\qbinom}[2]{\genfrac{[}{]}{0pt}{}{#1}{#2}} 
\newcommand{\im}{\mathrm{Im}\,}
\newcommand{\rank}{\mathrm{rank}\,}
\newcommand{\tr}{\mathrm{Tr}\,}
\newcommand{\xprod}[2]{
\langle #1,#2 \rangle
}

\begin{document}
\begin{abstract}
The set of subspaces of
a given dimension in an attenuated space has a structure of
a symmetric association scheme and
this association scheme is called an association scheme based on
an attenuated space.
Association schemes based on attenuated spaces are generalizations of
Grassmann schemes and bilinear forms schemes,
and also $q$-analogues of non-binary Johnson schemes.
Wang, Guo and Li computed the intersection numbers of association
schemes based on attenuated spaces.
The aim of this paper is to compute character tables of association
schemes based on attenuated spaces using the method of
Tarnanen, Aaltonen and Goethals.
Moreover, we also prove that association schemes
based on attenuated spaces include as a special case
the $m$-flat association scheme,
which is defined on the set of
cosets of subspaces of a constant dimension in a vector space over
a finite field.
\end{abstract}

\maketitle

\section{Introduction}
\label{sec:intro}
Let $\mathbb{F}_q$ be the finite field of size $q$
and $\mathbb{F}^N_q$ denotes the vector space of
$N$-tuples over $\mathbb{F}_q$.
For a positive integer $n$ and a non-negative integer $l$, 
we fix an $l$-dimensional subspace
$\mathfrak{e}$ of $\mathbb{F}^{n+l}_q$.
The corresponding {\it attenuated space} associated with
$\mathbb{F}^{n+l}_q$ and $\mathfrak{e}$ is the collection of
all subspaces of $\mathbb{F}^{n+l}_q$ intersecting trivially
with $\mathfrak{e}$.
For non-negative integers $m$ and $k$, an $m$-dimensional subspace
$\mathfrak{p}$ of $\mathbb{F}^{n+l}_q$ is called a subspace of
{\it type $(m,k)$} with respect to $\mathfrak{e}$
if $\dim \mathfrak{p}\cap \mathfrak{e}=k$,
and especially a subspace $\mathfrak{p}$ of type $(m,0)$ is
an element of the attenuated space associated with
$\mathbb{F}^{n+l}_q$ and $\mathfrak{e}$.
Denote the set of all subspaces of
type $(m,k)$ in $\mathbb{F}^{n+l}_q$ by $\mathcal{M}_q(m,k;n+l,n)$.
The cardinality of $\mathcal{M}_q(m,k;n+l,n)$ is
\[q^{(m-k)(l-k)}\qbinom{n}{m-k}_q\qbinom{l}{k}_q,\]
where $\qbinom{n}{k}_q=\prod ^{k-1}_{i=0}\frac{q^n-q^i}{q^k-q^i}$,
i.e., the {\it Gaussian coefficient}.
The subscript $q$ will be omitted
when there is no possibility of confusion.

In 2009, Wang, Guo and Li proved that $\mathcal{M}_q(m,0;n+l,n)$ has
a structure of a symmetric association scheme
$\mathfrak{X}(\mathcal{M}_q(m,0;n+l,n))$ and
they computed intersection numbers of
$\mathfrak{X}(\mathcal{M}_q(m,0;n+l,n))$~\cite{Wang2009asb}.
In their paper, the relation $R_{(i,j)}$ on
$\mathcal{M}_q(m,0;n+l,n)$ is defined to be the set of pairs
$(\mathfrak{p},\mathfrak{q})$ satisfying
\[
\dim ((\mathfrak{p}+\mathfrak{e})/\mathfrak{e}
\cap (\mathfrak{q}+\mathfrak{e})/\mathfrak{e})
=m-i\ 
\text{and}\ 
\dim \mathfrak{p}\cap \mathfrak{q}=(m-i)-j,\]
for $(i,j)\in K=\set{(i,j)\in {\mathbb{Z}_{\ge 0}}^2}
{i\le m\wedge (n-m),
\ j\le (m-i)\wedge l}$, 
where for integers $a$ and $b$, 
the value $a\wedge b$ denotes
$\min\{a,b \}$ for short.
Then $(\mathcal{M}_q(m,0;n+l,n), \{R_{(i,j)}\}_{(i,j)\in K})$ is
a symmetric association scheme
and called an
{\it association scheme based on
an attenuated space}.
In this paper,
we denote it by $\mathfrak{X}(\mathcal{M}_q(m,0;n+l,n))$.

The association scheme $\mathfrak{X}(\mathcal{M}_q(m,0;n+l,n))$
is a common generalization of the Grassmann scheme $J_q(n,m)$
and the bilinear forms scheme $H_q(n,l)$.
In fact, if $l=0$, then the association scheme
$\mathfrak{X}(\mathcal{M}_q(m,0;n,n))$ is isomorphic to
the Grassmann scheme $J_q(n,m)$ and if $m=n$,
then the association scheme $\mathfrak{X}(\mathcal{M}_q(n,0;n+l,n))$
is isomorphic to the bilinear forms scheme $H_q(n,l)$.
Moreover the association scheme
$\mathfrak{X}(\mathcal{M}_q(m,0;n+l,n))$ is also a $q$-analogue of
the non-binary Johnson scheme (cf.~\cite{Tarnanen1985njs}).

The aim of this paper is to determine the character tables of
association schemes based on attenuated spaces.
To determine the character tables, we use the method of Tarnanen,
Aaltonen and Goethals~\cite{Tarnanen1985njs}.

Determining the character table of an association scheme corresponds
to determining the spherical functions of a compact symmetric space.
In~\cite{Bannai1984aci}, Bannai and Ito referred to an analogy
between compact symmetric spaces of rank one and  a family of
association schemes which are called ($P$ and $Q$)-polynomial
association schemes.
In fact, zonal spherical functions of compact symmetric spaces of
rank one and eigenmatrices of ($P$ and $Q$)-polynomial association
schemes are described by certain orthogonal polynomials.
Moreover Bannai~\cite{Bannai1990ctoa} had the assurance that there
exists an analogy between general compact symmetric spaces and
most of commutative association schemes, and he observed relations
between spherical functions of some compact symmetric spaces and
character tables of some commutative association schemes.
In order to study relations between compact symmetric spaces and
commutative association schemes,
it is useful to know many examples of character tables of
commutative association schemes.

In Section~\ref{sec:main_sec},
the main result of this paper will be described after giving
some definitions and basic facts about association schemes.
In Section~\ref{sec:proof_of_main}, we prove Lemma~\ref{prop:prod_c},
which is a key lemma of the proof of the main result.
In Section~\ref{sec:m-flat},
a relation between association schemes based on attenuated spaces and
$m$-flat association schemes will be found.
Namely, we prove that association schemes
based on attenuated spaces include as a special case
the $m$-flat association scheme,
which is defined on the set of
cosets of subspaces of a constant dimension in a vector space over
a finite field.
Finally in Appendix~\ref{sec:formula}
we describe some useful formulas about the number of
subspaces of a vector space over a finite field
and equations related to the $q$-Gaussian coefficient,
the generalized Eberlein polynomials and
the generalized Krawtchouk polynomials,
which are used in this paper.

\section{Character tables of association schemes based on attenuated
spaces}
\label{sec:main_sec}
We begin with a review of basic definitions concerning
association schemes.
The reader is referred to Bannai-Ito~\cite{Bannai1984aci} for the
background material.

A {\it symmetric association scheme}
$\mathfrak{X}=(X,\{R_i\}_{0\le i\le d})$
consists of a finite set $X$ and a set $\{R_i\}_{0\le i\le d}$
of binary relations on $X$ satisfying:
\begin{enumerate}
\item $R_0=\set{(x,x)}{x\in X}$;
\item $\{R_i\}_{0\le i\le d}$ is a partition of $X\times X$;
\item $\trans{R_i}=R_i$ for each $i\in \{0,1,\ldots ,d\}$,
where $\trans{R_i}=\set{(y,x)}{(x,y)\in R_i}$;
\item the numbers $|\set{z\in X}{\text{$(x,z)\in R_i$ and $(z,y)\in
R_j$}}|$ are constant whenever $(x,y)\in R_k$, for each $i,j,k\in
\{0,1,\ldots ,d\}$.
\end{enumerate}
Note that the numbers $|\set{z\in X}{\text{$(x,z)\in R_i$ and
$(z,y) \in R_j$}}|$ are called the {\it intersection numbers}
and denoted by $p^{k}_{i,j}$.
Let $M_X(\mathbb{C})$ denote the algebra of matrices over the complex
field $\mathbb{C}$ with rows and columns indexed by $X$.
The $i$-th {\it adjacency matrix} $A_i$ in $M_X(\mathbb{C})$ of
$\mathfrak{X}$ is defined by
\[A_i(x,y)=
\begin{cases}
1 & \text{if $(x,y)\in R_i$,}\\
0 & \text{otherwise.}
\end{cases}\]
The vector space
$\mathfrak{A}=\langle A_0,A_1,\ldots, A_d\rangle_\mathbb{C}$
spanned by $A_i$'s ($i=0,1,\ldots ,d$) forms a commutative algebra
and called the {\it Bose-Mesner algebra} of
$\mathfrak{X}=(X,\{R_i\}_{0\le i\le d})$.
It is well known that $\mathfrak{A}$ is semi-simple,
hence $\mathfrak{A}$ has a basis
consisting of the primitive
idempotents $E_0=1/|X|J, E_1,\ldots ,E_d$,
i.e., $E_i E_j=\delta_{i,j}E_i$, $\sum^{d}_{i=0}E_i=I$, where $J$ is
the all-one matrix and
the notation $\delta _{i,j}$ stands for the value 1 if $i=j$,
0 otherwise.
The {\it first eigenmatrix} $P=(P_i(j))_{0\le j,i\le d}$
and the {\it second eigenmatrix} $Q=(Q_i(j))_{0\le j,i\le d}$
of $\mathfrak{X}$ are defined by 
\[A_i=\sum^{d}_{j=0}P_i(j)E_j\ 
\text{and}\ 
E_i=\frac{1}{|X|}\sum^{d}_{j=0}Q_i(j)A_j,\]
respectively.
In particular, the first eigenmatrix $P$ is also called the
{\it character table} of $\mathfrak{X}$.
Note that $P$ and $Q$ satisfy $P Q=Q P=|X| I$,
and it is well known that
\begin{equation}
\label{eq:P-Q-valency-multi}
\frac{Q_j(i)}{m_j}
=
\frac{P_i(j)}{v_i},
\end{equation}
where
$v_i=p^0_{i,i}$ is called the $i$-th {\it valency}
and
$m_j=\tr (E_j)$ is called the $j$-th {\it multiplicity}.

Next we give two examples of association schemes.
The eigenmatrices of association schemes based on
attenuated spaces will be given using
the entries of
the eigenmatrices of these association schemes.
Let $V$ and $E$ be $n$-dimensional and $l$-dimensional vector spaces
over $\mathbb{F}_q$, respectively,
and let $L(V,E)$ denote the set of all linear maps from $V$ to $E$.
For a subspace $\mathfrak{p}$ of $V$, the set of all $r$-dimensional
subspaces of $\mathfrak{p}$ is denoted by $\qbinom{\mathfrak{p}}{r}$.

The set $\qbinom{V}{m}$ together with the nonempty relations
\[
R_i=\set{(\mathfrak{x},\mathfrak{y})\in {\textstyle \qbinom{V}{m}^2}}
{\dim \mathfrak{x}\cap \mathfrak{y}=m-i}
\]
is an
$m\wedge (n-m)$-class symmetric association scheme called the
{\it Grassmann scheme} $J_q(n,m)$.
The first eigenmatrix $P^G=(P^G_k(x))_{0\le x,k\le m\wedge (n-m)}$
of the Grassmann scheme
$J_q(n,m)$ is given by the {\it generalized Eberlein polynomials}
$E_k(n,m;q;x)$
\cite{Delsarte1976paa}, namely
\begin{eqnarray*}
P^G_k(x)&=&E_k(n,m;q;x)\\
&=&\sum^{k}_{j=0}(-1)^{k-j}q^{j x+\binom{k-j}{2}}\qbinom{m-j}{m-k}
\qbinom{m-x}{j}\qbinom{n-m+j-x}{j}.
\end{eqnarray*}
Furthermore, since the valencies $v_i$ and the multiplicities $m_j$
of the Grassmann scheme $J_q(n,m)$ are given as
$v_i=q^{i^2}\qbinom{n-m}{i}\qbinom{m}{i}$
and
$m_j=\qbinom{n}{j}-\qbinom{n}{j-1}$,
respectively (cf.~\cite[p.~269]{Brouwer1989dg}),
by (\ref{eq:P-Q-valency-multi})
we obtain the second eigenmatrix 
$Q^G=(Q^G_k(x))_{0\le x,k\le m\wedge (n-m)}$
of the Grassmann scheme
$J_q(n,m)$
as follows;
\begin{eqnarray*}
\lefteqn{Q^G_k(x)}\\
&=&\frac{\qbinom{n}{k}-\qbinom{n}{k-1}}
{q^{x^2}\qbinom{n-m}{x}\qbinom{m}{x}}
\sum^{x}_{j=0}(-1)^{x-j}q^{j k+\binom{x-j}{2}}\qbinom{m-j}{m-x}
\qbinom{m-k}{j}\qbinom{n-m+j-k}{j}.
\end{eqnarray*}
We denote this value by $Q_k(n,m;q;x)$.

The set $L(V,E)$ together with the nonempty relations
\[R_i=\set{(f,g)\in L(V,E)^2}{\rank (f-g)=i}\]
is an $n\wedge l$-class symmetric association scheme called the
{\it bilinear forms scheme} $H_q(n,l)$~\cite{Delsarte1978bfo}.
The first eigenmatrix $P^B=(P^B_k(x))_{0\le x,k\le n\wedge l}$
of the bilinear forms scheme
$H_q(n,l)$ is given by the {\it generalized Krawtchouk polynomials}
$K_k(n,l;q;x)$
\cite{Delsarte1976paa}, namely
\begin{eqnarray*}
P^B_k(x)&=&K_k(n,l;q;x)\\
&=&\sum^{k}_{j=0}(-1)^{k-j}q^{j l+\binom{k-j}{2}}\qbinom{n-j}{n-k}
\qbinom{n-x}{j}.
\end{eqnarray*}
Note that bilinear forms schemes are self-dual, i.e., $P^B=Q^B$,
where $Q^B$ is the second eigenmatrix of $H_q(n,l)$
(see~\cite{Delsarte1978bfo}).
Hence, by $P^B Q^B =|L(V,E)| I$, we obtain
\begin{equation}
\label{eq:orth_Krawtchouk}
\sum^{n \wedge l}_{k=0}
K_k(n,l;q;i)
K_j(n,l;q;k)
=q^{n l} \delta _{i,j}.
\end{equation}

In order to calculate the character table of the association scheme
$\mathfrak{X}(\mathcal{M}_q(m,0;n+l,n))$,
we deal with another realization of
$\mathfrak{X}(\mathcal{M}_q(m,0;n+l,n))$.
Let $\X$ be the set of all pairs of a subspace $\mathfrak{x}$ in $V$
and a linear map $\xi$ in $L(\mathfrak{x}, E)$ and let
$\X_m =\set{(\mathfrak{x},\xi)\in \X}{\mathfrak{x}\in \qbinom{V}{m}}$.
The set $\X_m$ has cardinality $q^{ml} \qbinom{n}{m}$.
Then there is the following one-to-one correspondence between
$\X_m$ and $\mathcal{M}_q(m,0;n+l,n)$:
we regard $V\oplus E$ as $\mathbb{F}_q ^{n+l}$
and also regard $\{0\} \oplus E$ as $\mathfrak{e}$.
For $(\mathfrak{x},\xi)$ in $\X _m$,
we set $\mathfrak{x}_\xi =\set{(x, \xi (x))}{x\in \mathfrak{x}}$
in $\mathbb{F}_q ^{n+l}$.
Then we immediately check that
$\mathfrak{x}_\xi$ is type $(m,0)$ with respect to
$\mathfrak{e}$,
i.e., $\mathfrak{x}_\xi \in \mathcal{M}_q(m,0;n+l,n)$,
and
the map $(\mathfrak{x},\xi) \mapsto \mathfrak{x}_\xi$
from $\X _m$ to $\mathcal{M}_q(m,0;n+l,n)$
is injective.
On the other hand, we have
$|\X _m|=|\mathcal{M}_q(m,0;n+l,n)| =q^{ml} \qbinom{n}{m}$.
This means that the map from $\X _m$ to $\mathcal{M}_q(m,0;n+l,n)$
is bijective.
Next, we define the relation $S_{(i,j)}$ on $\X_m$ 
to be the set of pairs
$((\mathfrak{x},\xi), (\mathfrak{y},\eta))$ satisfying
\[
\dim \mathfrak{x}\cap \mathfrak{y}=m-i\ \text{and}\ 
\rank (\xi |_{\mathfrak{x}\cap \mathfrak{y}}-\eta |_{\mathfrak{x}
\cap \mathfrak{y}})=j.
\]
Then, by the two equalities
\begin{eqnarray*}
\dim
(\mathfrak{x}_\xi +\mathfrak{e})/\mathfrak{e}
\cap
(\mathfrak{y}_\eta +\mathfrak{e})/\mathfrak{e}
&=&
\dim
(\mathfrak{x} \cap \mathfrak{y}\oplus E)/
(\{0\}\oplus E)\\
&=&\dim \mathfrak{x} \cap \mathfrak{y}
\end{eqnarray*}
and
\begin{eqnarray*}
\dim \mathfrak{x}_\xi \cap \mathfrak{y}_\eta
&=&
\dim \set{(x, \xi(x))}{x \in \ker 
(\xi |_{\mathfrak{x}\cap \mathfrak{y}}-\eta |_{\mathfrak{x}
\cap \mathfrak{y}})}\\
&=&\dim \ker 
(\xi |_{\mathfrak{x}\cap \mathfrak{y}}-\eta |_{\mathfrak{x}
\cap \mathfrak{y}})\\
&=&\dim \mathfrak{x} \cap \mathfrak{y}-
\rank
(\xi |_{\mathfrak{x}\cap \mathfrak{y}}-\eta |_{\mathfrak{x}
\cap \mathfrak{y}}),
\end{eqnarray*}
it follows that
$((\mathfrak{x},\xi), (\mathfrak{y},\eta)) \in S_{(i,j)}$
if and only if $(\mathfrak{x}_\xi, \mathfrak{y}_\eta) \in R_{(i,j)}$.
Consequently, we obtain another representation of
$\mathfrak{X}(\mathcal{M}_q(m,0;n+l,n))$,
and we denote
the association scheme
$(\X_m, \{S_{(i,j)}\}_{(i,j)\in K})$ by
$\mathfrak{X}(\mathcal{M}_q(m,0;n+l,n))$. 

We calculate the character table of this association scheme
using the following theorem
proved by Tarnanen, Aaltonen and Goethals~\cite{Tarnanen1985njs}:
\begin{thm}
\label{thm:chara_thm}
Let $X$ be a non-empty finite set.
Assume that $\{R_i\}_{0\le i\le d}$ is a partition of $X\times X$,
each $R_i$ is a symmetric relation,
that is $\trans{R_i}=R_i$,
and $R_0=\set{(x,x)}{x\in X}$.
Let $\mathfrak{A}=\langle A_0,A_1,\ldots, A_d\rangle_\mathbb{C}$
be the complex linear space generated by the adjacency matrices
$A_i$ of $R_i$ $(i\in \{0,1,\ldots ,d\})$.
If there exist matrices $C_0=J, C_1,\ldots ,C_d$ of $\mathfrak{A}$
such that
\[A_i=\sum^{d}_{j=0}\alpha_i(j)C_j\ \text{and }\  C_k C_s=
\sum^{k\wedge s}_{j=0} \beta_{k;s}(j)C_j,\quad k,s\in
\{0,1,\ldots ,d\},\]
where $\alpha_i(j)$and $\beta_{k;s}(j)$ are complex numbers,
then $(X,\{R_i\}_{0\le i\le d})$ is a symmetric association scheme
and its first eigenmatrix $P=(P_i(s))_{0\le s, i\le d}$
is given by
\[P_i(s)=\sum^{d}_{k=s}\alpha_i(k)\beta_{k;s}(s),
\quad i,s\in \{0,1,\ldots ,d\}.\]
\end{thm}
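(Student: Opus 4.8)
The plan is to read the eigenmatrix off from the triangular shape of the multiplication rule $C_kC_s=\sum_{j=0}^{k\wedge s}\beta_{k;s}(j)C_j$ by adapting the flag of subspaces $\langle C_0,\dots,C_s\rangle_{\mathbb{C}}$ to the primitive idempotents of $\mathfrak{A}$. First I would check that $C_0,\dots,C_d$ is a basis of $\mathfrak{A}$: the identities $A_i=\sum_j\alpha_i(j)C_j$ give $\mathfrak{A}\subseteq\langle C_0,\dots,C_d\rangle_{\mathbb{C}}$, the hypothesis $C_j\in\mathfrak{A}$ gives the reverse inclusion, and both spaces have dimension $d+1$ since the $A_i$ are linearly independent (their supports $R_i$ are nonempty and pairwise disjoint). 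The rule $C_kC_s=\sum_j\beta_{k;s}(j)C_j$ then shows that $\mathfrak{A}$ is closed under matrix multiplication, so $A_iA_j\in\mathfrak{A}=\langle A_0,\dots,A_d\rangle_{\mathbb{C}}$; comparing $(x,y)$-entries shows the intersection numbers $p^k_{i,j}$ are well defined, so $(X,\{R_i\}_{0\le i\le d})$ is a symmetric association scheme and $\mathfrak{A}$ is its Bose-Mesner algebra, hence semisimple with primitive idempotents $E_0=\frac{1}{|X|}J,E_1,\dots,E_d$. Writing $C_k=\sum_{j=0}^d c_k(j)E_j$, where $c_k(j)$ denotes the eigenvalue of $C_k$ on the common eigenspace $E_j\mathbb{C}^X$, it remains to determine the numbers $c_k(j)$.

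The crux is to notice that for each $s$ the subspace $\mathfrak{B}_s:=\langle C_0,\dots,C_s\rangle_{\mathbb{C}}$ is not merely a subalgebra but an \emph{ideal} of $\mathfrak{A}$, because $C_kC_{s'}\in\langle C_0,\dots,C_{k\wedge s'}\rangle_{\mathbb{C}}\subseteq\mathfrak{B}_s$ whenever $k\le d$ and $s'\le s$. Since $\mathfrak{A}$ is a semisimple commutative algebra, its ideals are exactly the sums of subsets of $\{\mathbb{C}E_0,\dots,\mathbb{C}E_d\}$; combined with $\mathfrak{B}_0\subsetneq\mathfrak{B}_1\subsetneq\cdots\subsetneq\mathfrak{B}_d=\mathfrak{A}$, $\dim\mathfrak{B}_s=s+1$, and $\mathfrak{B}_0=\langle J\rangle_{\mathbb{C}}=\langle E_0\rangle_{\mathbb{C}}$, this lets me relabel $E_1,\dots,E_d$ so that $\mathfrak{B}_s=\langle E_0,\dots,E_s\rangle_{\mathbb{C}}$ for every $s$. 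Now $C_k\in\mathfrak{B}_k$ forces $c_k(j)=0$ for all $j>k$, and moreover $c_k(k)\ne0$, since otherwise $C_k\in\mathfrak{B}_{k-1}$, contradicting the linear independence of $C_0,\dots,C_k$.

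With this structure in hand the rest is a short computation. For $k\ge s$, multiplying $C_kC_s=\sum_{j=0}^{s}\beta_{k;s}(j)C_j$ by $E_s$ and comparing eigenvalues on $E_s\mathbb{C}^X$ annihilates every term with $j<s$ (where $c_j(s)=0$), leaving $c_k(s)c_s(s)=\beta_{k;s}(s)c_s(s)$, hence $c_k(s)=\beta_{k;s}(s)$. Finally, $A_i=\sum_k\alpha_i(k)C_k=\sum_j\bigl(\sum_k\alpha_i(k)c_k(j)\bigr)E_j$ identifies $P_i(s)=\sum_{k=0}^d\alpha_i(k)c_k(s)=\sum_{k=s}^d\alpha_i(k)\beta_{k;s}(s)$ for all $i,s\in\{0,\dots,d\}$, which is the asserted formula. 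I expect the one genuinely delicate point to be recognizing that $\mathfrak{B}_0\subsetneq\cdots\subsetneq\mathfrak{B}_d$ is a flag of ideals and extracting the compatible relabelling of the idempotents from it; everything before and after that step is routine linear algebra.
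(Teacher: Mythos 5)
The paper offers no proof of this theorem at all: it is quoted verbatim from Tarnanen--Aaltonen--Goethals \cite{Tarnanen1985njs}, so there is nothing internal to compare against. Your argument is correct and complete, and it is essentially the standard proof of this result: the only point worth double-checking is the one you flag yourself, namely that each $\mathfrak{B}_s=\langle C_0,\dots,C_s\rangle_{\mathbb{C}}$ is an ideal (which follows since $k\wedge s'\le s$ for all $k$ and all $s'\le s$, and the $C_j$ form a basis of $\mathfrak{A}$), that ideals of the commutative semisimple algebra $\mathfrak{A}$ are sums of the minimal ideals $\mathbb{C}E_j$, and that the resulting flag therefore determines the ordering of $E_1,\dots,E_d$ implicit in the statement of the eigenmatrix formula. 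The subsequent extraction $c_k(s)=\beta_{k;s}(s)$ from $c_k(s)c_s(s)=\beta_{k;s}(s)c_s(s)$ with $c_s(s)\neq 0$, and the final substitution into $A_i=\sum_k\alpha_i(k)C_k$, are exactly right.
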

For the rest of this paper, let $d=m\wedge (n-m)$.
Let $A_{(i,j)}$ denote 
the adjacency matrix of the relation
$S_{(i,j)}$.
We define $L=\set{(r,s)\in {\mathbb{Z}_{\ge 0}}^2}{s\le m\wedge l,
r\le m, \ 0\le r-s\le d}$.
Since $(i,j)\mapsto (r,s)=(i+j,j)$ is a bijection from
$K$ to $L$, we have $|L|=|K|$. 
For $(r,s)\in L$, a matrix $C_{(r,s)}$ is defined as follows,
which serves as $C_i$ in Theorem \ref{thm:chara_thm}:
for $0\le i\le d$ and $0\le s\le m\wedge l$,
\[B_{(i,s)}=\sum^{(m-i) \wedge l}_{j=0}K_s(m-i,l;q;j)A_{(i,j)}\]
and for $(r,s)\in L$,
\[C_{(r,s)}=\sum^{d\wedge (m-s)}_{i=0}\qbinom{m-i-s}{r-s}B_{(i,s)}.\]
Namely,
for $((\mathfrak{x},\xi),(\mathfrak{y},\eta))\in S_{(m-u,u-v)}$,
the $((\mathfrak{x},\xi), (\mathfrak{y},\eta))$-entry of $C_{(r,s)}$
is as follows:
\begin{equation}
\label{eq:C_rs}
C_{(r,s)}((\mathfrak{x},\xi),(\mathfrak{y},\eta))=
\qbinom{u-s}{r-s}K_s(u,l;q;u-v).
\end{equation}
By using (\ref{eq:orth_Krawtchouk})
for the bilinear forms scheme $H_q(m-i,l)$,
for $(i,j)\in K$,
we obtain
\begin{align*}
A_{(i,j)}&=
q^{-(m-i)l}\sum^{(m-i)\wedge l}_{h=0}K_{j}(m-i,l;q;h)B_{(i,h)}
\\ &=
q^{-(m-i)l}\sum^{m \wedge l}_{h=0}K_{j}(m-i,l;q;h)B_{(i,h)},
\end{align*}
since $K_{j}(m-i,l;q;h)=0$ if $h>m-i$.
We claim that the square matrix
\[
M^s=
\left(\qbinom{m-i-s}{r-s}\right)
_{\substack{0\le i\le d\wedge (m-s),\\ s\le r\le (d+s)\wedge m}}
\]
is nonsingular. Indeed,
since $\qbinom{m-i-s}{r-s}$ is a polynomial of degree $r-s$
in $q^{-i}$,
$M^s$ can be converted into a Vandermonde matrix
$((q^{-i})^{r'})_{0\le i,r'\le d\wedge (m-s)}$
by a sequence of elementary transformations.
Thus the inverse matrix $N^s$
of $M^s$ satisfies
\begin{equation}
\label{eq:N^s}
\sum^{(d+s)\wedge m}_{k=s}
\qbinom{m-i-s}{k-s}N^s (k,j)
=\delta_{i,j}
\end{equation}
for $0\le i,j\le d\wedge (m-s)$.
By (\ref{eq:N^s}),
for $0\le i\le d$ and $0\le h \le m \wedge l$,
we obtain
\[B_{(i,h)}=\sum^{(d+h)\wedge m}_{k=h}
N^h (k,i) C_{(k,h)}.\]
Therefore we have
$A_{(i,j)}=\sum_{(k,h)\in L}\alpha _{(i,j)}(k,h)C_{(k,h)}$, where
\begin{equation}
\label{eq:alpha}
\alpha _{(i,j)}(k,h)=q^{-(m-i)l}
K_{j}(m-i,l;q;h) N^h (k,i).
\end{equation}

The following lemma gives the expansion of $C_{(r,s)}C_{(k,h)}$
in terms of $C_{(i,j)}$,
which serves as the expansion of $C_k C_s$ in terms of $C_j$
in Theorem \ref{thm:chara_thm}.
\begin{lem}
\label{prop:prod_c}
The matrices $\{C_{(r,s)}\}_{(r,s)\in L}$ satisfy
\[C_{(r,s)}C_{(k,h)}=\sum^{r\wedge k}_{i=s}
\beta_{(r,s;k,h)}(i,s) C_{(i,s)},\]
where
\[\beta_{(r,s;k,h)}(i,s)=\delta_{s,h}q^{ml+(i-s)(m-r-k+i)}
\qbinom{m-i}{m-r}\qbinom{m-i}{m-k}\qbinom{n-r-k+s}{m-r-k+i}.\]
\end{lem}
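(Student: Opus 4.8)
The plan is to realize each matrix $C_{(r,s)}$ by an explicit character sum, substitute into the product $C_{(r,s)}C_{(k,h)}$, collapse the resulting sum over $\X_m$, and thereby reduce Lemma~\ref{prop:prod_c} to an identity among Gaussian coefficients.

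\emph{Step 1: a character-sum expression for $C_{(r,s)}$.} Fix a nontrivial additive character $\chi$ of $\mathbb{F}_q$. I claim that for $(\mathfrak{x},\xi),(\mathfrak{y},\eta)\in\X_m$,
\[
C_{(r,s)}((\mathfrak{x},\xi),(\mathfrak{y},\eta))=
\sum_{\mathfrak{w}\in\qbinom{\mathfrak{x}\cap\mathfrak{y}}{r}}\
\sum_{\substack{\psi\in L(E,\mathfrak{w})\\ \rank\psi=s}}
\chi\bigl(\tr\bigl(\psi\circ(\xi-\eta)|_{\mathfrak{w}}\bigr)\bigr).
\]
Indeed, by~(\ref{eq:C_rs}) the left-hand side equals $\qbinom{u-s}{r-s}K_s(u,l;q;\rank\delta)$ with $u=\dim\mathfrak{x}\cap\mathfrak{y}$ and $\delta=(\xi-\eta)|_{\mathfrak{x}\cap\mathfrak{y}}$. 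Since the bilinear forms scheme $H_q(u,l)$ is a self-dual translation scheme, its $s$-th primitive idempotent has $(f,g)$-entry $q^{-ul}K_s(u,l;q;\rank(f-g))$ and is simultaneously given by $q^{-ul}\sum_{\rank\psi=s}\chi(\tr(\psi(f-g)))$, the sum over $\psi\in L(E,\mathfrak{x}\cap\mathfrak{y})$ (cf.~\cite{Delsarte1978bfo}); comparing entries gives $K_s(u,l;q;\rank\delta)=\sum_{\rank\psi=s}\chi(\tr(\psi\delta))$. Writing $\qbinom{u-s}{r-s}$ as the number of $r$-dimensional $\mathfrak{w}$ with $\im\psi\subseteq\mathfrak{w}\subseteq\mathfrak{x}\cap\mathfrak{y}$, and using that $\tr T=\tr(T|_{\mathfrak{w}})$ for any endomorphism $T$ with $\im T\subseteq\mathfrak{w}$, yields the claim. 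The same computation applied to $C_{(i,s)}$ shows $C_{(i,s)}((\mathfrak{x},\xi),(\mathfrak{z},\zeta))=\qbinom{u'-s}{i-s}\,K_s(u',l;q;\rank((\xi-\zeta)|_{\mathfrak{x}\cap\mathfrak{z}}))$ with $u'=\dim\mathfrak{x}\cap\mathfrak{z}$ (consistent with~(\ref{eq:C_rs})).

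\emph{Step 2: collapsing the product.} Substituting the formula of Step~1 into both factors of $(C_{(r,s)}C_{(k,h)})((\mathfrak{x},\xi),(\mathfrak{z},\zeta))$ and interchanging the order of summation, one sums first over pairs $(\mathfrak{w},\psi)$ with $\mathfrak{w}\in\qbinom{\mathfrak{x}}{r}$ and $(\mathfrak{w}',\psi')$ with $\mathfrak{w}'\in\qbinom{\mathfrak{z}}{k}$, and only afterwards over $(\mathfrak{y},\eta)\in\X_m$ with $\mathfrak{w}+\mathfrak{w}'\subseteq\mathfrak{y}$. For fixed such data the inner sum over $\eta\in L(\mathfrak{y},E)$ is $\chi$ of an affine function of $\eta$, hence is $q^{ml}$ or $0$; by perfectness of the trace pairing $L(\mathfrak{y},E)\times L(E,\mathfrak{y})\to\mathbb{F}_q$ it is nonzero exactly when the two maps $E\to\mathfrak{y}$ obtained from $\psi$ and $\psi'$ by composing with the inclusions $\mathfrak{w}\hookrightarrow\mathfrak{y}$, $\mathfrak{w}'\hookrightarrow\mathfrak{y}$ coincide. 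This forces $\im\psi=\im\psi'$ — in particular $s=h$, the source of the factor $\delta_{s,h}$, so that the product is the zero matrix when $s\neq h$ — and identifies $\psi$ and $\psi'$ with a single surjection $\psi_0\colon E\twoheadrightarrow\mathfrak{u}$ onto a common $s$-dimensional subspace $\mathfrak{u}\subseteq\mathfrak{x}\cap\mathfrak{z}$ with $\mathfrak{u}\subseteq\mathfrak{w}\cap\mathfrak{w}'$. The surviving character value simplifies to $\chi(\tr(\psi_0\circ(\xi-\zeta)|_{\mathfrak{u}}))$, independent of $\mathfrak{w},\mathfrak{w}',\mathfrak{y}$, while summing over the admissible $(\mathfrak{y},\eta)$ contributes the factor $q^{ml}\qbinom{n-\dim(\mathfrak{w}+\mathfrak{w}')}{m-\dim(\mathfrak{w}+\mathfrak{w}')}$. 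Hence, for $s=h$,
\[
(C_{(r,s)}C_{(k,h)})((\mathfrak{x},\xi),(\mathfrak{z},\zeta))=q^{ml}\,\widetilde{N}(s,u')\,\Xi,
\]
where $\Xi=\sum_{\mathfrak{u}\in\qbinom{\mathfrak{x}\cap\mathfrak{z}}{s}}\sum_{\psi_0\colon E\twoheadrightarrow\mathfrak{u}}\chi(\tr(\psi_0(\xi-\zeta)|_{\mathfrak{u}}))=K_s(u',l;q;\rank((\xi-\zeta)|_{\mathfrak{x}\cap\mathfrak{z}}))$ (again by Step~1), and
\[
\widetilde{N}(s,u')=\sum_{\substack{\mathfrak{u}\subseteq\mathfrak{w}\subseteq\mathfrak{x},\ \dim\mathfrak{w}=r\\ \mathfrak{u}\subseteq\mathfrak{w}'\subseteq\mathfrak{z},\ \dim\mathfrak{w}'=k}}\qbinom{n-\dim(\mathfrak{w}+\mathfrak{w}')}{m-\dim(\mathfrak{w}+\mathfrak{w}')},
\]
which is independent of the choice of $\mathfrak{u}\in\qbinom{\mathfrak{x}\cap\mathfrak{z}}{s}$ because the pointwise stabilizer of $(\mathfrak{x},\mathfrak{z})$ in $GL(V)$ acts transitively on such $\mathfrak{u}$.

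\emph{Step 3: the Gaussian-coefficient identity.} Since $C_{(i,s)}((\mathfrak{x},\xi),(\mathfrak{z},\zeta))=\qbinom{u'-s}{i-s}\Xi$ by Step~1, the Lemma follows once we establish, for $s=h$, the scalar identity
\[
q^{ml}\,\widetilde{N}(s,u')=\sum_{i=s}^{r\wedge k}\beta_{(r,s;k,h)}(i,s)\,\qbinom{u'-s}{i-s}.
\]
To evaluate $\widetilde{N}(s,u')$ I would pass to $V/\mathfrak{u}$, reducing it to a count of triples $(\mathfrak{w},\mathfrak{w}',\mathfrak{y})$ of subspaces of prescribed dimensions with $\mathfrak{w},\mathfrak{w}'\subseteq\mathfrak{y}$, and then choose $\mathfrak{w}$, then $\mathfrak{w}'$, then $\mathfrak{y}$, stratifying by $\dim(\mathfrak{w}\cap\mathfrak{z})$ and $\dim(\mathfrak{w}\cap\mathfrak{w}')$; this expresses $\widetilde{N}(s,u')$ as a double sum of products of Gaussian coefficients and powers of $q$. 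The required equality with $\sum_{i}\beta_{(r,s;k,h)}(i,s)\qbinom{u'-s}{i-s}$ then follows from the $q$-Vandermonde-type manipulations of Gaussian coefficients gathered in Appendix~\ref{sec:formula}. I expect this last step — matching the multi-fold Gaussian-coefficient sum against the claimed closed form — to be the main technical obstacle; by contrast the representation-theoretic content (the character realization of $C_{(r,s)}$ and the collapse of the inner $\eta$-sum), though the conceptual heart of the argument, is essentially mechanical.
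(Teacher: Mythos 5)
Your overall strategy coincides with the paper's: Steps 1 and 2 reproduce, in slightly different packaging, the paper's Lemma \ref{clm:chara_and_krawtchouk} (the character-sum realization of $C_{(r,s)}$ via pairs of a subspace and a rank-$s$ map $E\to V$) and the first half of its proof of Lemma \ref{prop:prod_c} (orthogonality of additive characters collapsing the middle sum over $\X_m$, producing the factor $\delta_{s,h}$ and the weight $q^{ml}\qbinom{n-\dim(\mathfrak{w}+\mathfrak{w}')}{m-\dim(\mathfrak{w}+\mathfrak{w}')}$). These two steps are correct. Indeed, your derivation of $\sum_{\mathfrak{w}}\sum_{\rank\psi=s}\chi(\tr(\psi\circ(\xi-\eta)|_{\mathfrak{w}}))=\qbinom{u-s}{r-s}K_s(u,l;q;u-v)$ by grouping the rank-$s$ maps $E\to\mathfrak{x}\cap\mathfrak{y}$ according to which $r$-spaces contain their image is a little cleaner than the paper's route through Lemma \ref{lem:relation_of_krawtchouk}, and your reduction of the whole lemma to the single scalar identity $q^{ml}\,\widetilde{N}(s,u')=\sum_i\beta_{(r,s;k,s)}(i,s)\qbinom{u'-s}{i-s}$ is a tidy reformulation; the independence of $\widetilde{N}$ from the choice of $\mathfrak{u}$, justified by transitivity of the subgroup of $GL(V)$ preserving each of $\mathfrak{x}$ and $\mathfrak{z}$, is sound.

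The gap is Step 3, which you explicitly leave as a plan: evaluating $\widetilde{N}(s,u')$ and matching it against the claimed closed form. This is not a routine afterthought --- it is where the paper spends most of its proof. Stratifying the pairs $(\mathfrak{w},\mathfrak{w}')$ by $e=\dim(\mathfrak{w}\cap\mathfrak{w}')$ and two auxiliary incidence parameters yields a triple sum, and the paper untangles it by three applications of Proposition \ref{thm:q-bino} (\ref{eq:most_important}) (with carefully chosen substitutions, in a specific order that eliminates the parameters one at a time) interleaved with one $q$-Vandermonde summation. The identities in Proposition \ref{thm:q-bino} (\ref{eq:most_important}) are not $q$-Vandermonde: they are $q$-analogues of Delsarte's identity (4.26), proved in the appendix via Gaussian coefficients with negative arguments, and finding the right instances to apply is the actual work. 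So the assertion that the required equality ``follows from the $q$-Vandermonde-type manipulations gathered in Appendix \ref{sec:formula}'' both understates and omits the decisive computation. As written, the proposal establishes the structure of the answer (the factor $\delta_{s,h}$, the factor $q^{ml}$, and the fact that the expansion coefficients depend only on $r,k,s,i$) but not the stated formula for $\beta_{(r,s;k,h)}(i,s)$, which is the content of the lemma.
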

We will prove Lemma \ref{prop:prod_c} in Section
\ref{sec:proof_of_main}.
Using Lemma \ref{prop:prod_c},
we obtain the main theorem of this paper.
\begin{thm}
\label{thm:main}
Let $P=(P_{(i,j)}(r,s))_{(r,s)\in L, (i,j)\in K}$
and $Q=(Q_{(r,s)}(i,j))_{(i,j)\in K, (r,s)\in L}$
be the first and second eigenmatrices of the association scheme based
on attenuated space $\mathfrak{X}(\mathcal{M}_q(m,0;n+l,n))$,
respectively.
Then the following hold.
\begin{equation}
\label{eq:main_thm_P}
P_{(i,j)}(r,s)=
q^{i l}K_{j}(m-i,l;q;s)E_i(n-s,m-s;q;r-s)
\end{equation}
and
\begin{equation}
\label{eq:main_thm_Q}
Q_{(r,s)}(i,j)=
\frac{\qbinom{n}{m}}{\qbinom{n-s}{m-s}}K_s(m-i,l;q;j)
Q_{r-s}(n-s,m-s;q;i).
\end{equation}
\end{thm}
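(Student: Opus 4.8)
The plan is to extract the first eigenmatrix from Theorem~\ref{thm:chara_thm} using the data \eqref{eq:alpha} and Lemma~\ref{prop:prod_c}, and then to pin down the second eigenmatrix by checking that a candidate matrix satisfies the defining $P$--$Q$ relation.

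\emph{First eigenmatrix.} First I would verify that $\{C_{(r,s)}\}_{(r,s)\in L}$ can play the role of $C_0=J,\dots ,C_d$ in Theorem~\ref{thm:chara_thm}. From \eqref{eq:C_rs} (equivalently, from $K_0\equiv 1$ and $\qbinom{m-i}{0}=1$) one gets $C_{(0,0)}=\sum_i B_{(i,0)}=\sum_{(i,j)\in K}A_{(i,j)}=J$; the expansion $A_{(i,j)}=\sum_{(k,h)\in L}\alpha_{(i,j)}(k,h)C_{(k,h)}$ is \eqref{eq:alpha}, so, $|L|=|K|$ being the number of classes, the $C_{(r,s)}$ form a basis of $\mathfrak A$; and Lemma~\ref{prop:prod_c} writes $C_{(r,s)}C_{(k,h)}$ as a combination of the $C_{(i,s)}$ with $s\le i\le r\wedge k$, which vanishes unless $h=s$. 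Ordering $L$ by the second coordinate and then by the first puts $(0,0)$ first and makes every product $C_{(r,s)}C_{(k,h)}$ lie in the span of the $C$'s preceding both of its factors, so Theorem~\ref{thm:chara_thm} applies and, under the bijection $(i,j)\mapsto(i+j,j)$ from $K$ onto $L$, yields
\[
P_{(i,j)}(r,s)=\sum_{(k,h)\succeq(r,s)}\alpha_{(i,j)}(k,h)\,\beta_{(k,h;r,s)}(r,s).
\]
The factor $\delta_{h,s}$ in $\beta$ collapses this to a sum over $k\ge r$; substituting $\alpha_{(i,j)}(k,s)=q^{-(m-i)l}K_j(m-i,l;q;s)N^s(k,i)$ and $\beta_{(k,s;r,s)}(r,s)=q^{ml+(r-s)(m-k)}\qbinom{m-r}{m-k}\qbinom{n-k-r+s}{m-k}$ and simplifying the powers of $q$, the proof of \eqref{eq:main_thm_P} comes down to
\[
\sum_{k}N^s(k,i)\,q^{(r-s)(m-k)}\qbinom{m-r}{m-k}\qbinom{n-k-r+s}{m-k}=E_i(n-s,m-s;q;r-s).
\]
Since $N^s$ inverts $M^s=\bigl(\qbinom{m-i-s}{k-s}\bigr)$, by \eqref{eq:N^s} it is enough to prove the single identity
\[
\sum_{i'}\qbinom{m-i'-s}{k-s}E_{i'}(n-s,m-s;q;r-s)=q^{(r-s)(m-k)}\qbinom{m-r}{m-k}\qbinom{n-k-r+s}{m-k},
\]
a $q$-binomial identity for the generalized Eberlein polynomials of the type collected in Appendix~\ref{sec:formula}. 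I would prove it by expanding $E_{i'}$ from its definition, interchanging the order of summation, and evaluating the inner sum via the subspace identity $\qbinom{A}{u}\qbinom{A-u}{y}=\qbinom{A}{y}\qbinom{A-y}{u}$ together with the $q$-binomial theorem in the form $\sum_{u}(-1)^{u}q^{\binom{u}{2}}\qbinom{c}{u}=\delta_{c,0}$; only the term with $t=m-k$ survives, and it reproduces the right-hand side. This hypergeometric-style computation is the one nontrivial step --- the main obstacle --- everything else being organizational; granting it, \eqref{eq:main_thm_P} follows.

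\emph{Second eigenmatrix.} Rather than compute valencies and multiplicities, I would let $\widehat Q$ be the matrix defined by the right-hand side of \eqref{eq:main_thm_Q} and verify the relation characterizing the second eigenmatrix, $\sum_{(i,j)\in K}\widehat Q_{(r,s)}(i,j)\,P_{(i,j)}(r',s')=|X|\,\delta_{(r,s),(r',s')}$, using \eqref{eq:main_thm_P}; since this system determines the second eigenmatrix uniquely from $P$, it will follow that $\widehat Q=Q$. Writing the sum as a sum over $i$ of an inner sum over $j$, the inner sum $\sum_{j}K_s(m-i,l;q;j)K_j(m-i,l;q;s')$ equals $q^{(m-i)l}\delta_{s,s'}$ by the Krawtchouk orthogonality \eqref{eq:orth_Krawtchouk} for the bilinear forms scheme $H_q(m-i,l)$, so $s=s'$; the powers of $q$ then combine to $q^{ml}$, and the outer sum $\sum_{i}Q_{r-s}(n-s,m-s;q;i)E_i(n-s,m-s;q;r'-s)$ equals $\qbinom{n-s}{m-s}\delta_{r,r'}$ by the orthogonality $Q^{G}P^{G}=\qbinom{n-s}{m-s}I$ of the Grassmann scheme $J_q(n-s,m-s)$. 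Collecting the surviving constants gives $q^{ml}\qbinom{n}{m}\,\delta_{(r,s),(r',s')}=|X|\,\delta_{(r,s),(r',s')}$, which proves \eqref{eq:main_thm_Q}.
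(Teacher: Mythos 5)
Your proposal is correct and follows essentially the same route as the paper: apply Theorem~\ref{thm:chara_thm} with the order on $L$ by second coordinate then first, collapse the sum via the $\delta_{s,h}$ in Lemma~\ref{prop:prod_c} and the inverse relation \eqref{eq:N^s}, and reduce \eqref{eq:main_thm_P} to the Eberlein identity that the paper isolates as Lemma~\ref{lem:eberlein} (which it proves in the appendix by exactly the expand-and-swap argument with $q$-binomial orthogonality that you sketch). Your verification of \eqref{eq:main_thm_Q} via the relation $\widehat{Q}P=|X|I$ and the two orthogonalities for $H_q(m-i,l)$ and $J_q(n-s,m-s)$ is likewise the paper's argument, only spelled out in slightly more detail.
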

\begin{proof}
Let $\succeq$ be the lexicographical order of $L$ so that the
relation $(k,h)\succeq (r,s)$ means that $h> s$ or that $k\ge r$
if $h=s$.
By Theorem \ref{thm:chara_thm}, the eigenvalue $P_{(i,j)}(r,s)$ of
$\mathfrak{X}(\mathcal{M}_q(m,0;n+l,n))
=(\X_m, \{S_{(i,j)}\}_{(i,j)\in K})$ is calculated
as follows.
For $(i,j)\in K$ and $(r,s)\in L$,
we have
\begin{eqnarray*}
P_{(i,j)}(r,s) &=&\sum_{(k,h)\succeq (r,s)}\alpha_{(i,j)}(k,h)
\beta_{(k,h;r,s)}(r,s)\\
&=&\sum^{m\wedge (d+s)}_{k=r}q^{-(m-i)l}
K_{j}(m-i,l;q;s) N^s (k,i)\\
&&\times q^{ml}q^{(r-s)(m-k)}\qbinom{m-r}{m-k}\qbinom{n-k-r+s}{m-k}\\
&=&q^{il}K_{j}(m-i,l;q;s)\sum^{m\wedge (d+s)}_{k=s} N^s (k,i)\\
&&\times q^{(m-k)(r-s)}\qbinom{m-r}{m-k}\qbinom{n-k-r+s}{m-k},
\end{eqnarray*}
since $\qbinom{m-r}{m-k}=0$ if $k<r$.
Moreover using Lemma \ref{lem:eberlein}
and (\ref{eq:N^s}),
we obtain
\begin{eqnarray*}
P_{(i,j)}(r,s)
&=&q^{i l}K_{j}(m-i,l;q;s)\sum^{m\wedge (d+s)}_{k=s}N^s (k,i)\\
&&\times \sum^{m-s}_{t=0}\qbinom{m-s-t}{k-s}E_t(n-s,m-s;q;r-s)\\
&=&q^{i l}K_{j}(m-i,l;q;s)E_i(n-s,m-s;q;r-s).
\end{eqnarray*}

To verify
(\ref{eq:main_thm_Q}),
it is sufficient to show that
\[\sum_{(i,j)\in K}
\frac{\qbinom{n}{m}}{\qbinom{n-s}{m-s}}K_s(m-i,l;q;j)
Q_{r-s}(n-s,m-s;q;i)P_{(i,j)}(k,h)
=q^{ml}\qbinom{n}{m}\delta_{r,k}\delta_{s,h}\]
for all $(r,s)$ and $(k,h)$ in $L$.
Substituting (\ref{eq:main_thm_P}) for $P_{(i,j)}(k,h)$ and
using the orthogonality of
eigenmatrices of Grassmann schemes
$J_q(n-s,m-s)$
and bilinear forms schemes $H_q(m-i,l)$,
this orthogonality relation is verified.
\end{proof}

\section{Proof of Lemma \ref{prop:prod_c}}
\label{sec:proof_of_main}
Let $\Y$ be the set of all pairs of a subspace $\mathfrak{a}$ in $V$
and a linear map $f$ from $E$ to $\mathfrak{a}$, and
$\Y_r=\set{(\mathfrak{a},f)\in \Y}{\dim \mathfrak{a}=r}$.
For a subspace $\mathfrak{x}$ of $V$,
let $\Y^\mathfrak{x}_r=\set{(\mathfrak{a},f)\in \Y_r}{\mathfrak{a}
\subset \mathfrak{x}}$ and 
$\Y^\mathfrak{x}_{r,s}=\set{(\mathfrak{a},f)\in 
\Y^\mathfrak{x}_r}{\rank f=s}$.
We shall define a complex-valued function
$\xprod{\cdot}{\cdot}$
over $\Y\times \X$ needed in the proof of Lemma \ref{prop:prod_c}.
Consider a pair $((\mathfrak{a},f),(\mathfrak{x},\xi))\in \Y\times\X$
satisfying $\mathfrak{a}\subset \mathfrak{x}$.
When $\dim \mathfrak{x}=r$, the linear map
$f\circ \xi:\mathfrak{x}\rightarrow \mathfrak{a}\subset \mathfrak{x}$
is regarded as a square matrix $T_{f\circ \xi}$ of degree $r$ for
some basis of $\mathfrak{x}$.
Note that the trace $\tr (T_{f\circ \xi})$ of $T_{f\circ \xi}$
is independent of the chosen basis of $\mathfrak{x}$,
so that $\tr (T_{f\circ \xi})$ only depends on the pair
$((\mathfrak{a},f),(\mathfrak{x},\xi))$.
\begin{df}
Let $p$ be the prime divisor of $q$ and let
$\epsilon$ be a primitive $p$-th root of unity.
We take $\chi : \mathbb{F}_q \rightarrow \mathbb{Z}[\epsilon]$ to be a
non-principal character of the elementary abelian $p$-group
$(\mathbb{F}_q, +)$.
The function 
$\Y\times \X\ni ((\mathfrak{a},f),(\mathfrak{x},\xi))\mapsto
\xprod{(\mathfrak{a},f)}{(\mathfrak{x},\xi)}\in \mathbb{C}$
is defined as
\[\xprod{(\mathfrak{a},f)}{(\mathfrak{x},\xi)}=
\begin{cases}
\chi (\tr (T_{f\circ \xi})) &
\text{if $\mathfrak{a}\subset \mathfrak{x}$,}\\
0& \text{otherwise.}
\end{cases}\]
\end{df}
Fix an element $\mathfrak{x}$ in $\qbinom{V}{r}$.
For $f\in L(E, \mathfrak{x})$,
we define the function
$\Lambda _f : L(\mathfrak{x},E)\rightarrow \mathbb{C}$
by
$\Lambda _f(\xi)=\xprod{(\mathfrak{x},f)}{(\mathfrak{x},\xi)}$.
Then
$\set{\Lambda _f}{f\in L(E, \mathfrak{x})}$
is the character group of $L(\mathfrak{x},E)$
and we denote it by $L(\mathfrak{x},E)^\ast$.
There is the orthogonality relation of
$L(\mathfrak{x},E)^\ast$:
\begin{equation}
\label{eq:krawtchouk}
\sum_{\xi \in L(\mathfrak{x},E)}
\overline{\xprod{(\mathfrak{x},f)}{(\mathfrak{x},\xi)}}
\xprod{(\mathfrak{x},g)}{(\mathfrak{x},\xi)}=
\begin{cases}
q^{r l} & \text{if $f =g$,}\\
0 & \text{otherwise,}
\end{cases}
\end{equation}
and there is the following equation
(cf.~\cite{Delsarte1978bfo}):
\begin{equation}
\label{eq:krawtchouk_rank_s}
\sum_{\substack{f\in L(E,\mathfrak{x})\\
\rank f=s}}
\xprod{(\mathfrak{x},f)}{(\mathfrak{x},\xi)}
\overline{\xprod{(\mathfrak{x},f)}{(\mathfrak{x},\eta)}}
=K_s(r,l;q;\rank(\xi-\eta)).
\end{equation}

For $(\mathfrak{a},f)$ and $(\mathfrak{b},g)$ in $\Y$,
if $f(x)=g(x)$ for any $x\in E$,
then we call the pair $(\mathfrak{a},f)$ and $(\mathfrak{b},g)$
{\it almost equal} and denote this by
$(\mathfrak{a},f)\approx (\mathfrak{b},g)$.
When $(\mathfrak{a},f)\approx (\mathfrak{b},g)$,
it can be easily seen that
$\im f=\im g\subset \mathfrak{a}\cap \mathfrak{b}$ and
$\xprod{(\mathfrak{a},f)}{(\mathfrak{x},\xi)} =
\xprod{(\mathfrak{b},g)}{(\mathfrak{x},\xi)}$
for any $(\mathfrak{x},\xi)\in \X$ with
$\mathfrak{a}+\mathfrak{b}\subset \mathfrak{x}$.

\begin{lem}
\label{clm:chara_and_krawtchouk}
For $(\mathfrak{x},\xi)$ and $(\mathfrak{y},\eta)$ in $\X_m$,
\begin{equation}
\label{eq:vipeq}
\sum_{(\mathfrak{a},f)\in \Y^V_{r,s}}\xprod{(\mathfrak{a},f)}
{(\mathfrak{x},\xi)}\overline{\xprod{(\mathfrak{a},f)}
{(\mathfrak{y},\eta)}}
=C_{(r,s)}((\mathfrak{x},\xi), (\mathfrak{y},\eta)).
\end{equation}
\end{lem}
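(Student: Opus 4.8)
The plan is to compute the left-hand side of \eqref{eq:vipeq} directly by first summing over the linear maps $f$ for a fixed subspace $\mathfrak{a}$, and then summing over the subspaces $\mathfrak{a}$. First I would note that the summand $\xprod{(\mathfrak{a},f)}{(\mathfrak{x},\xi)}\overline{\xprod{(\mathfrak{a},f)}{(\mathfrak{y},\eta)}}$ vanishes unless $\mathfrak{a}\subset\mathfrak{x}$ \emph{and} $\mathfrak{a}\subset\mathfrak{y}$, hence unless $\mathfrak{a}\subset\mathfrak{x}\cap\mathfrak{y}$. So the outer sum over $(\mathfrak{a},f)\in\Y^V_{r,s}$ collapses to a sum over $(\mathfrak{a},f)$ with $\mathfrak{a}\subset\mathfrak{x}\cap\mathfrak{y}$, $\dim\mathfrak{a}=r$ and $\rank f=s$; in particular the sum is empty (and both sides are zero) unless $r\le\dim\mathfrak{x}\cap\mathfrak{y}$. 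Write $u=\dim\mathfrak{x}\cap\mathfrak{y}$, so that $((\mathfrak{x},\xi),(\mathfrak{y},\eta))\in S_{(m-u,\,\rank(\xi|_{\mathfrak{x}\cap\mathfrak{y}}-\eta|_{\mathfrak{x}\cap\mathfrak{y}}))}$; setting $v$ so that $\rank(\xi|_{\mathfrak{x}\cap\mathfrak{y}}-\eta|_{\mathfrak{x}\cap\mathfrak{y}})=u-v$ matches the indexing in \eqref{eq:C_rs}, and the target value is $\qbinom{u-s}{r-s}K_s(u,l;q;u-v)$.

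Next I would fix an $r$-dimensional subspace $\mathfrak{a}\subset\mathfrak{x}\cap\mathfrak{y}$ and carry out the inner sum $\sum_{f\in L(E,\mathfrak{a}),\,\rank f=s}\xprod{(\mathfrak{a},f)}{(\mathfrak{x},\xi)}\overline{\xprod{(\mathfrak{a},f)}{(\mathfrak{y},\eta)}}$. The key point is that the trace defining $\xprod{(\mathfrak{a},f)}{(\mathfrak{x},\xi)}$ only sees $\xi$ through its restriction to $\mathfrak{a}$: since $f$ maps into $\mathfrak{a}$, one has $\tr(T_{f\circ\xi})=\tr(T_{f\circ\xi|_{\mathfrak{a}}})$ computed as endomorphisms of $\mathfrak{a}$. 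Viewing $f$ as an element of $L(E,\mathfrak{a})$, this lets me replace $\xprod{(\mathfrak{a},f)}{(\mathfrak{x},\xi)}$ by $\xprod{(\mathfrak{a},f)}{(\mathfrak{a},\xi|_{\mathfrak{a}})}$ and likewise for $\eta$, and then apply \eqref{eq:krawtchouk_rank_s} with the $r$-dimensional space $\mathfrak{a}$ in place of $\mathfrak{x}$. That yields exactly $K_s\bigl(r,l;q;\rank(\xi|_{\mathfrak{a}}-\eta|_{\mathfrak{a}})\bigr)$ for the inner sum.

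Then I would sum over the subspaces $\mathfrak{a}$. Partition the $\mathfrak{a}\in\qbinom{\mathfrak{x}\cap\mathfrak{y}}{r}$ according to the rank of $(\xi-\eta)|_{\mathfrak{a}}$, i.e.\ according to $\dim\mathfrak{a}\cap\ker((\xi-\eta)|_{\mathfrak{x}\cap\mathfrak{y}})$. Since $(\xi-\eta)|_{\mathfrak{x}\cap\mathfrak{y}}$ has rank $u-v$, its kernel $W$ has dimension $v$; for an $r$-subspace $\mathfrak{a}$ of $\mathfrak{x}\cap\mathfrak{y}$ with $\dim(\mathfrak{a}\cap W)=r-t$ one gets $\rank((\xi-\eta)|_{\mathfrak{a}})=t$. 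Counting such $\mathfrak{a}$ is a standard Gaussian-binomial count (the number of $r$-subspaces of a $u$-space meeting a fixed $v$-space in dimension $r-t$), and the inner sum contributes the constant $K_s(r,l;q;t)$ on that block. Thus the whole left-hand side becomes $\sum_{t}(\text{count})\,K_s(r,l;q;t)$, a sum over $t$ of a product of a Gaussian-coefficient expression and a Krawtchouk value. The main obstacle — really the only nontrivial computational step — is to check that this sum collapses to the closed form $\qbinom{u-s}{r-s}K_s(u,l;q;u-v)$ of \eqref{eq:C_rs}; I expect this to follow from a known contiguous/addition identity for the generalized Krawtchouk polynomials (of the type recorded in Appendix~\ref{sec:formula}), expressing $K_s(u,l;q;u-v)$ as a weighted sum of the $K_s(r,l;q;t)$ over $r$-dimensional "sections", after pulling out the factor $\qbinom{u-s}{r-s}$. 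Once that identity is invoked, comparison with \eqref{eq:C_rs} finishes the proof.
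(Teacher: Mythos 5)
Your proposal is correct and follows essentially the same route as the paper: restrict the sum to $\mathfrak{a}\subset\mathfrak{x}\cap\mathfrak{y}$, reduce the inner sum over $f$ to $K_s(r,l;q;\rank(\xi|_{\mathfrak{a}}-\eta|_{\mathfrak{a}}))$ via the block-triangular form of $T_{f\circ\xi}$ and (\ref{eq:krawtchouk_rank_s}), stratify the $\mathfrak{a}$ by their intersection with $\ker((\xi-\eta)|_{\mathfrak{x}\cap\mathfrak{y}})$ using the count of Corollary \ref{lem:BCN}, and collapse the resulting sum. The Krawtchouk identity you defer to is exactly Lemma \ref{lem:relation_of_krawtchouk}, which the paper states and proves in Appendix \ref{sec:formula}.
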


\begin{proof}
By the definition of $\xprod{\cdot}{\cdot}$,
the range of the summation of $(\mathfrak{a},f)$
in the left hand side of (\ref{eq:vipeq}) is restricted within
$\Y^{\mathfrak{x}\cap \mathfrak{y}}_{r,s}$.
Since $f\circ \xi$ has a matrix representation
of the following form
\[T_{f\circ \xi} =\left(
\begin{array}{cc}
T &\ast \\
0 &0 \\
\end{array}\right),\]
where $T$ is a matrix representation of
$f\circ \xi |_\mathfrak{a} \in L(\mathfrak{a},\mathfrak{a})$,
we obtain
\[\xprod{(\mathfrak{a},f)}{(\mathfrak{x},\xi)}=
\xprod{(\mathfrak{a},f)}{(\mathfrak{a},\xi |_{\mathfrak{a}})}.\]
Similarly, we obtain
\[\xprod{(\mathfrak{a},f)}{(\mathfrak{y},\eta)}=
\xprod{(\mathfrak{a},f)}{(\mathfrak{a},\eta |_{\mathfrak{a}})}.\]
Therefore by (\ref{eq:krawtchouk_rank_s}), we obtain
\begin{eqnarray*}
\sum_{(\mathfrak{a},f)\in \Y^V_{r,s}}\xprod{(\mathfrak{a},f)}
{(\mathfrak{x},\xi)}\overline{\xprod{(\mathfrak{a},f)}
{(\mathfrak{y},\eta)}}&=&
\sum_{(\mathfrak{a},f)\in \Y^{\mathfrak{x}\cap \mathfrak{y}}_{r,s}}
\xprod{(\mathfrak{a},f)}{(\mathfrak{a},\xi |_{\mathfrak{a}})}
\overline{\xprod{(\mathfrak{a},f)}
{(\mathfrak{a},\eta |_{\mathfrak{a}})}}\\
&=&\sum_{\mathfrak{a}\in \qbinom{\mathfrak{x}\cap \mathfrak{y}}{r}}
K_s(r,l;q;\rank (\xi |_{\mathfrak{a}}-\eta |_{\mathfrak{a}})).\\
\end{eqnarray*}
Let us remark that
$\rank(\xi |_{\mathfrak{a}}-\eta |_{\mathfrak{a}})=\dim \mathfrak{a}
-\dim \ker (\xi |_{\mathfrak{a}}-\eta |_{\mathfrak{a}})$
and
$\ker (\xi |_{\mathfrak{a}}-\eta |_{\mathfrak{a}})=\ker (\xi |_
{\mathfrak{x}\cap \mathfrak{y}}-\eta |_{\mathfrak{x}\cap
\mathfrak{y}})\cap \mathfrak{a}$.
Thus when $\dim \mathfrak{x}\cap
\mathfrak{y}=u$ and
$\dim \ker (\xi |_
{\mathfrak{x}\cap \mathfrak{y}}-\eta |_{\mathfrak{x}\cap
\mathfrak{y}})=v$
(i.e., $((\mathfrak{x},\xi), (\mathfrak{y},\eta)) \in S_{(m-u,u-v)}$),
by Corollary \ref{lem:BCN},
the number of
$\mathfrak{a}\in \qbinom{\mathfrak{x}\cap \mathfrak{y}}{r}$
with
$\dim \ker (\xi |_{\mathfrak{x}\cap \mathfrak{y}}-\eta |
_{\mathfrak{x}\cap \mathfrak{y}})\cap \mathfrak{a}=r-j$
is equal to 
\[q^{j(v-r+j)}\qbinom{u-v}{j}\qbinom{v}{r-j}.\]
Hence it follows that
\begin{eqnarray*}
\sum_{\mathfrak{a}\in \qbinom{\mathfrak{x}\cap \mathfrak{y}}{r}}
K_s(r,l;q;\rank (\xi |_{\mathfrak{a}}-\eta |_{\mathfrak{a}}))
&=&\sum^{r\wedge (u-v)}_{j=0\vee (v-r)}q^{j(v-r+j)}\qbinom{u-v}{j}
\qbinom{v}{r-j}K_s(r,l;q;j)\\
&=&\qbinom{u-s}{r-s}K_s(u,l;q;u-v)\\
&=&C_{(r,s)}((\mathfrak{x},\xi), (\mathfrak{y},\eta)),
\end{eqnarray*}
where for integers $a$ and $b$, 
the value $a\vee b$ denotes
$\max\{a,b \}$ for short.
In the second line,
we used Lemma \ref{lem:relation_of_krawtchouk}.
\end{proof}
\begin{proof}[Proof of Lemma \ref{prop:prod_c}]
Since $\mathfrak{X}(\mathcal{M}_q(m,0;n+l,n))$ is symmetric,
without loss of generality,
we may suppose $r\le k$.
By Lemma \ref{clm:chara_and_krawtchouk},
for $((\mathfrak{x},\xi), (\mathfrak{y},\eta))\in S_{(m-u,u-v)}$,
we obtain
\begin{eqnarray*}
C_{(r,s)}C_{(k,h)}((\mathfrak{x},\xi), (\mathfrak{y},\eta)) &=&
\sum_{(\mathfrak{z},\zeta)\in \X_m}C_{(r,s)}((\mathfrak{x},\xi),
(\mathfrak{z},\zeta))\cdot C_{(k,h)}((\mathfrak{z},\zeta),
(\mathfrak{y},\eta))\\
&=&\sum_{(\mathfrak{a},f)\in \Y^V_{r,s}}\sum_{(\mathfrak{b},g)\in
\Y^V_{k,h}}\xprod{(\mathfrak{a},f)}{(\mathfrak{x},\xi)}
\overline{\xprod{(\mathfrak{b},g)}{(\mathfrak{y},\eta)}}\\
&&\times \sum_{(\mathfrak{z},\zeta)\in \X_m}\overline{\xprod{(
\mathfrak{a},f)}{(\mathfrak{z},\zeta)}}\xprod{(\mathfrak{b},g)}
{(\mathfrak{z},\zeta)}.
\end{eqnarray*}
By the definition of $\xprod{\cdot}{\cdot}$, we may restrict the range of
the summation to be those
$(\mathfrak{z},\zeta)\in \X_m$ satisfying
$\mathfrak{z}\supset \mathfrak{a+b}$.
Thus we obtain
\begin{eqnarray*}
\sum_{(\mathfrak{z},\zeta)\in \X_m}\overline{\xprod{(\mathfrak{a},f)}
{(\mathfrak{z},\zeta)}}\xprod{(\mathfrak{b},g)}{(\mathfrak{z},\zeta)}
&=&\sum_{\substack{\mathfrak{z}\in \qbinom{V}{m}\\
\mathfrak{z}\supset \mathfrak{a}+\mathfrak{b}}}
\sum_{\zeta\in L(\mathfrak{z},E)}\overline{\xprod{(\mathfrak{a},f)}
{(\mathfrak{z},\zeta)}}\xprod{(\mathfrak{b},g)}{(\mathfrak{z},\zeta)}
\\
&=&\sum_{\substack{\mathfrak{z}\in \qbinom{V}{m}\\
\mathfrak{z}\supset \mathfrak{a}+\mathfrak{b}}}
\sum_{\zeta\in L(\mathfrak{z},E)}\overline{\xprod{(\mathfrak{z},f)}
{(\mathfrak{z},\zeta)}}\xprod{(\mathfrak{z},g)}{(\mathfrak{z},\zeta)}
\end{eqnarray*}
and, by (\ref{eq:krawtchouk}),
the value
$\sum_{\zeta\in L(\mathfrak{z},E)}\overline{\xprod{(\mathfrak{z},f)}
{(\mathfrak{z},\zeta)}}\xprod{(\mathfrak{z},g)}
{(\mathfrak{z},\zeta)}$
is equal to $q^{ml}$ if $(\mathfrak{a},f)\approx (\mathfrak{b},g)$
or vanishes otherwise.
Hence, if $h \neq s$, then $C_{(r,s)}C_{(k,h)}=0$
and we will henceforth assume that $h=s$.
The number of $\mathfrak{z}\in \qbinom{V}{m}$
satisfying
$\mathfrak{z}\supset \mathfrak{a}+\mathfrak{b}$ is equal to
$
\qbinom{n-\dim (\mathfrak{a}+\mathfrak{b})}
{m-\dim (\mathfrak{a}+\mathfrak{b})}
$.
Thus we obtain
\begin{eqnarray}
\label{eq:C_rs*C_ks}
\lefteqn{C_{(r,s)}C_{(k,s)}
((\mathfrak{x},\xi), (\mathfrak{y},\eta))}\\
\notag&=&
\sum_{\substack{(\mathfrak{a},f)\in \Y^\mathfrak{x}_{r,s}\\
(\mathfrak{b},g)\in \Y^\mathfrak{y}_{k,s}\\
(\mathfrak{a},f)\approx (\mathfrak{b},g)}}
\xprod{(\mathfrak{a},f)}{(\mathfrak{x},\xi)}
\overline{\xprod{(\mathfrak{b},g)}{(\mathfrak{y},\eta)}}
q^{ml}\qbinom{n-\dim (\mathfrak{a}+\mathfrak{b})}
{m-\dim (\mathfrak{a}+\mathfrak{b})}.
\end{eqnarray}

Let us impose the additional constraints:
$\dim\mathfrak{a}\cap \mathfrak{b}=e$,
$\dim\mathfrak{a}\cap \mathfrak{y}=r-t$ and 
$\dim\mathfrak{b}\cap \mathfrak{x}=k-j$.
Since $(\mathfrak{a},f)\approx (\mathfrak{b},g)$,
i.e., $\im f=\im g\subset \mathfrak{a} \cap \mathfrak{b}$,
it follows that $e \ge s$ and
that $C_{(r,s)}C_{(k,h)}((\mathfrak{x},\xi), (\mathfrak{y},\eta))$
is equal to 
\[
q^{ml}\sum^{r}_{e=s}\qbinom{n-r-k+e}{m-r-k+e}
\sum^{r-e}_{t=0}\sum^{k-e}_{j=0}\sum_{(\mathfrak{a},f)\in T_1}
\sum_{\mathfrak{b}\in T_2}
\xprod{(\mathfrak{a},f)}{(\mathfrak{x},\xi)}
\overline{
\xprod{(\mathfrak{b},f)}{(\mathfrak{y},\eta)}},
\]
where
$T_1=\set{(\mathfrak{a},f)\in \Y^{\mathfrak{x}}_{r,s}}
{\dim\mathfrak{a}\cap \mathfrak{y}=r-t,
\ \im f\subset \mathfrak{a}\cap \mathfrak{y}}$
and
$T_2=
\set{\mathfrak{b}\in \qbinom{\mathfrak{y}}{k}}{
\dim\mathfrak{b}\cap \mathfrak{x}=k-j,
\ \dim\mathfrak{a}\cap \mathfrak{b}=e,\ \mathfrak{b}\supset \im f}$.
Since
$\xprod{(\mathfrak{b},f)}{(\mathfrak{y},\eta)}
=
\xprod{(\mathfrak{a}\cap \mathfrak{y},f)}{(\mathfrak{y},\eta)}$
is independent of $\mathfrak{b}$,
it follows that
\begin{eqnarray}
\label{eq:(a,f)_and_b}
\lefteqn{
\sum_{(\mathfrak{a},f)\in T_1}
\sum_{\mathfrak{b}\in T_2}
\xprod{(\mathfrak{a},f)}{(\mathfrak{x},\xi)}
\overline{
\xprod{(\mathfrak{b},f)}{(\mathfrak{y},\eta)}}
}\\
&=&\notag
\sum_{(\mathfrak{a},f)\in T_1}|T_2|
\xprod{(\mathfrak{a},f)}{(\mathfrak{x},\xi)}
\overline{
\xprod{(\mathfrak{a} \cap \mathfrak{y},f)}{(\mathfrak{y},\eta)}}.
\end{eqnarray}
We shall count the number of elements in $T_2$.
By Proposition \ref{prop:dunkl},
the number of
$\mathfrak{b'}\in \qbinom{\mathfrak{x}\cap \mathfrak{y}}{k-j}$
satisfying $\mathfrak{b}'\supset \im f$ and
$\dim\mathfrak{a}\cap \mathfrak{b}'=e$ is
\[q^{(r-t-e)(k-j-e)}\qbinom{r-t-s}{e-s}\qbinom{u-r+t}{k-j-e}\]
and the number of $\mathfrak{b}\in \qbinom{\mathfrak{y}}{k}$
satisfying $\mathfrak{b}\cap \mathfrak{x}=\mathfrak{b}'$ is
\[q^{j(u-k+j)}\qbinom{m-u}{j},\]
that is,
\begin{equation}
\label{eq:T_2}
|T_2|=q^{(r-t-e)(k-j-e)+j(u-k+j)}
\qbinom{r-t-s}{e-s}\qbinom{u-r+t}{k-j-e}\qbinom{m-u}{j}.
\end{equation}
This means that
$|T_2|$ is independent of the choice of $(\mathfrak{a},f)$.
Note that, for 
$(\mathfrak{a}',f')\in \Y^{\mathfrak{x}\cap \mathfrak{y}}_{r-t,s}$,
all $(\mathfrak{a},f)\in T_1$ with
$\mathfrak{a} \cap \mathfrak{y}=\mathfrak{a}'$ and
$f(x)= f'(x)$ for any
$x\in E$ satisfy $\xprod{(\mathfrak{a},f)}{(\mathfrak{x},\xi)}
\overline{
\xprod{(\mathfrak{a} \cap \mathfrak{y},f)}{(\mathfrak{y},\eta)}}=
\xprod{(\mathfrak{a}',f')}{(\mathfrak{x},\xi)}
\overline{\xprod{(\mathfrak{a}',f')}{(\mathfrak{y},\eta)}}$.
On the other hand, by Proposition \ref{prop:dunkl},
it follows that the number of $(\mathfrak{a},f)\in T_1$ with
$\mathfrak{a} \cap \mathfrak{y}=\mathfrak{a}'$ and
$f(x)= f'(x)$ for any $x\in E$ is 
\[q^{t(u-r+t)}\qbinom{m-u}{t}.\]
Therefore we obtain
\begin{eqnarray}
\label{eq:a_and_f}
\lefteqn{\sum_{(\mathfrak{a},f)\in T_1}
\xprod{(\mathfrak{a},f)}{(\mathfrak{x},\xi)}
\overline{
\xprod{(\mathfrak{a} \cap \mathfrak{y},f)}{(\mathfrak{y},\eta)}}}\\
&=&q^{t(u-r+t)}\qbinom{m-u}{t}
\sum_{(\mathfrak{a}',f')\in \Y^{\mathfrak{x}\cap
\mathfrak{y}}_{r-t,s}}
\xprod{(\mathfrak{a}',f')}{(\mathfrak{x},\xi)}
\overline{\xprod{(\mathfrak{a}',f')}{(\mathfrak{y},\eta)}}\notag.
\end{eqnarray}
By (\ref{eq:(a,f)_and_b}), (\ref{eq:T_2}) and
(\ref{eq:a_and_f}) and Lemma \ref{clm:chara_and_krawtchouk},
it follows that
$C_{(r,s)}C_{(k,h)}((\mathfrak{x},\xi), (\mathfrak{y},\eta))$
is equal to 
\begin{eqnarray*}
\lefteqn{q^{ml}\sum^{r}_{e=s}
\qbinom{n-r-k+e}{m-r-k+e}\sum^{r-e}_{t=0}
\sum^{k-e}_{j=0}q^{(u-r+t)t+(u-k+j)j+(r-t-e)(k-j-e)}}\\
&&\times\qbinom{m-u}{t}\qbinom{m-u}{j}\qbinom{r-t-s}{e-s}
\qbinom{u-r+t}{k-j-e}
C_{(r-t,s)}((\mathfrak{x},\xi), (\mathfrak{y},\eta)).
\end{eqnarray*}
Substituting (\ref{eq:C_rs}) for
$C_{(r-t,s)}((\mathfrak{x},\xi), (\mathfrak{y},\eta))$ and
using Proposition \ref{thm:q-bino} (\ref{eq:most_important}) (a) as
$(k,x,r,t)=(m-s,u-s,r-s,r-t-s)$, i.e.,
\[
\qbinom{u-s}{r-t-s}\qbinom{m-u}{t}
=
\sum^{r}_{i=s}
(-1)^{i-r+t}q^{-t(u-r+t)+\binom{i-r+t}{2}}
\qbinom{i-s}{r-t-s}\qbinom{m-i}{r-i}\qbinom{u-s}{i-s},
\]
and
substituting (\ref{eq:C_rs}) for
$C_{(i,s)}((\mathfrak{x},\xi), (\mathfrak{y},\eta))$ again,
$C_{(r,s)}C_{(k,h)}((\mathfrak{x},\xi), (\mathfrak{y},\eta))$
is equal to
\begin{eqnarray*}
\lefteqn{q^{ml}\sum^{r}_{i=s}\sum^{r}_{e=s}
\qbinom{n-r-k+e}{m-r-k+e}\sum^{r-e}_{t=0}(-1)^{i-r+t}
\qbinom{r-t-s}{e-s}\qbinom{i-s}{r-t-s}\qbinom{m-i}{r-i}}\\
&&\times q^{(r-t-e)(k-e)+\binom{i-r+t}{2}}\sum^{k-e}_{j=0}
q^{j(u-r+t-k+j+e)}\qbinom{m-u}{j}\qbinom{u-r+t}{k-j-e}
C_{(i,s)}((\mathfrak{x},\xi), (\mathfrak{y},\eta)).
\end{eqnarray*}
Using Proposition \ref{thm:q-bino} (\ref{eq:q-Vandermonde}) (b)
for the summation of $j$, i.e.,
\[
\sum^{k-e}_{j=0}
q^{j(u-r+t-k+j+e)}\qbinom{m-u}{j}\qbinom{u-r+t}{k-j-e}
=
\qbinom{m-r+t}{k-e},
\]
using Proposition \ref{thm:q-bino}
(\ref{eq:most_important}) (b) as
$(k,x,r,t)=(m-s,i-s,m-s-k+e,e-s)$,
i.e.,
\begin{eqnarray*}
&&\sum^{r-e}_{t=0}(-1)^{i-r+t}
q^{(r-t-e)(k-e)+\binom{i-r+t}{2}}
\qbinom{r-t-s}{e-s}\qbinom{i-s}{r-t-s}
\qbinom{m-r+t}{k-e}\\
&&=
(-1)^{i-e}q^{\binom{i-e}{2}}
\qbinom{i-s}{e-s}\qbinom{m-i}{m-k},
\end{eqnarray*}
and using Proposition \ref{thm:q-bino}
(\ref{eq:most_important}) (a) as
$(k,x,r,t)=(n-r-k+i,i-s,m-r-k+i,0)$,
i.e.,
\[
\sum^{r}_{e=s}(-1)^{i-e}q^{\binom{i-e}{2}}
\qbinom{n-r-k+e}{m-r-k+e}\qbinom{i-s}{e-s}
=
\qbinom{n-r-k+s}{m-r-k+i}q^{(m-r-k+i)(i-s)},
\]
we have
\begin{eqnarray*}
&&\sum^{r}_{e=s}
\qbinom{n-r-k+e}{m-r-k+e}\sum^{r-e}_{t=0}(-1)^{i-r+t}
\qbinom{r-t-s}{e-s}\qbinom{i-s}{r-t-s}\\
&&\times q^{(r-t-e)(k-e)+\binom{i-r+t}{2}}\sum^{k-e}_{j=0}
q^{j(u-r+t-k+j+e)}\qbinom{m-u}{j}\qbinom{u-r+t}{k-j-e}\\
&&=
\qbinom{m-i}{m-k}
\qbinom{n-r-k+s}{m-r-k+i}q^{(m-r-k+i)(i-s)}.
\end{eqnarray*}
Therefore the desired result follows.
\end{proof}

\section{Relations between association schemes based on attenuated
spaces and $m$-flat association schemes}
\label{sec:m-flat}
Assume that integers $n$ and $m$ satisfy $0 < m < n$.
First in this section,
we describe the definition of $m$-flat association schemes.
The cosets of $\mathbb{F}_q^n$ relative to any $m$-dimensional vector
subspace are called {\it $m$-flats}.
Let $X_n(m)$ be the set of all $m$-flats of $\mathbb{F}_q^n$.
Then the cardinality of $X_n(m)$ is $q^{n-m}\qbinom{n}{m}$.
We define the relation $T_{(i,j)}$ on $X_n(m)$
to be the set of pairs 
$(\mathfrak{p} + x, \mathfrak{q} + y)$ satisfying
$\dim \mathfrak{p} \cap \mathfrak{q}=m-i$,
for $0\le i\le d$,
and $x-y \in \mathfrak{p}+\mathfrak{q}$ if $j=0$,
$x-y \notin \mathfrak{p}+\mathfrak{q}$ if $j=1$.
Then the pair
$\tilde{J}_q(n,m)=
(X_n(m),\{T_{(i,j)}\}_{0\le i\le d, 0\le j \le (n-m-i)\wedge 1})$
is a symmetric association scheme and called an
{\it $m$-flat association scheme}.
Zhu and Li computed all intersection numbers of $m$-flat association
schemes~\cite{Zhu1997coa} and the author gave the character table
of $m$-flat association schemes~\cite{Kurihara2009cto}.

There are relations between $m$-flat association
schemes and association schemes based on attenuated spaces
as follows:
\begin{thm}
\label{thm:m-flat}
The $m$-flat association scheme $\tilde{J}_q(n,m)$ is isomorphic to
the association scheme based on attenuated space
$\mathfrak{X}(\mathcal{M}_q(n-m,0;n+1,n))$.
\end{thm}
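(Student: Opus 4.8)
The plan is to work with the second realization $(\X_{n-m},\{S_{(i,j)}\})$ of $\mathfrak{X}(\mathcal{M}_q(n-m,0;n+1,n))$ constructed in Section~\ref{sec:main_sec}, specialized to $l=1$. In that case $V=\mathbb{F}_q^n$, the space $E$ is one-dimensional, $L(\mathfrak{x},E)$ may be identified with the dual space $\mathfrak{x}^\ast$, and $\X_{n-m}$ is the set of pairs $(\mathfrak{x},\xi)$ with $\mathfrak{x}\in\qbinom{V}{n-m}$ and $\xi\in\mathfrak{x}^\ast$. First I would note that the index sets coincide: with $m$ replaced by $n-m$ and $l=1$, the parameter set $K$ becomes $\set{(i,j)}{0\le i\le d,\ 0\le j\le (n-m-i)\wedge 1}$ where $d=m\wedge (n-m)$, and this is exactly the range of $(i,j)$ for the relations $T_{(i,j)}$ of $\tilde{J}_q(n,m)$.

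Next I would fix a nondegenerate bilinear form $\langle\cdot,\cdot\rangle$ on $V$ and define $\Phi\colon\X_{n-m}\to X_n(m)$ by $\Phi(\mathfrak{x},\xi)=\mathfrak{x}^\perp+v$, where $\mathfrak{x}^\perp$ is the $m$-dimensional orthogonal complement of $\mathfrak{x}$ and $v\in V$ is any vector with $\langle u,v\rangle=\xi(u)$ for all $u\in\mathfrak{x}$. Such a $v$ exists since the form is nondegenerate and is unique modulo $\mathfrak{x}^\perp$, so the $m$-flat $\mathfrak{x}^\perp+v$ is well defined. (Equivalently, one identifies $\xi$ with a coset of the annihilator of $\mathfrak{x}$ in $V^\ast$.) The map $\Phi$ is a bijection: its inverse sends an $m$-flat $\mathfrak{p}+w$ to $(\mathfrak{p}^\perp,\,\langle\cdot,w\rangle|_{\mathfrak{p}^\perp})$, which is well defined by the same reasoning, and $\mathfrak{p}^{\perp\perp}=\mathfrak{p}$ shows the two maps are mutually inverse; if one prefers, $|X_n(m)|=q^{n-m}\qbinom{n}{m}=|\X_{n-m}|$ already yields bijectivity.

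It then remains to check that $\Phi$ carries each $S_{(i,j)}$ onto $T_{(i,j)}$. Write $\Phi(\mathfrak{x},\xi)=\mathfrak{x}^\perp+v$ and $\Phi(\mathfrak{y},\eta)=\mathfrak{y}^\perp+w$. From $\mathfrak{x}^\perp+\mathfrak{y}^\perp=(\mathfrak{x}\cap\mathfrak{y})^\perp$ and $\mathfrak{x}^\perp\cap\mathfrak{y}^\perp=(\mathfrak{x}+\mathfrak{y})^\perp$ one gets $\dim(\mathfrak{x}^\perp\cap\mathfrak{y}^\perp)=n-\dim(\mathfrak{x}+\mathfrak{y})$, which equals $m-i$ exactly when $\dim(\mathfrak{x}\cap\mathfrak{y})=(n-m)-i$. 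Moreover $v-w\in\mathfrak{x}^\perp+\mathfrak{y}^\perp=(\mathfrak{x}\cap\mathfrak{y})^\perp$ if and only if $\langle u,v\rangle=\langle u,w\rangle$ for every $u\in\mathfrak{x}\cap\mathfrak{y}$, that is $\xi|_{\mathfrak{x}\cap\mathfrak{y}}=\eta|_{\mathfrak{x}\cap\mathfrak{y}}$; since $E$ is one-dimensional this last condition is precisely $\rank(\xi|_{\mathfrak{x}\cap\mathfrak{y}}-\eta|_{\mathfrak{x}\cap\mathfrak{y}})=0$. Hence $((\mathfrak{x},\xi),(\mathfrak{y},\eta))\in S_{(i,j)}$ if and only if $(\Phi(\mathfrak{x},\xi),\Phi(\mathfrak{y},\eta))\in T_{(i,j)}$, so $\Phi$ is an isomorphism of association schemes, which proves the theorem.

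I do not anticipate a genuine obstacle; the argument is essentially bookkeeping. The one conceptual point that makes it work is that, because $E$ is one-dimensional, the rank parameter $j$ of $S_{(i,j)}$ takes only the values $0$ and $1$, and ``rank $0$'' means nothing more than that the two restricted linear functionals agree --- which is exactly what collapses the parameter set $K$ of a general attenuated-space scheme to the binary index set of the $m$-flat scheme. The remaining care is just to verify the well-definedness of $\Phi$ and $\Phi^{-1}$ and to keep the annihilator/dimension identities straight.
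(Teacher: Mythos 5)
Your proof is correct and is essentially the paper's own argument run in reverse: the paper sends the $m$-flat $\mathfrak{p}+x$ to the graph subspace $\set{(v,(x,v))}{v\in\mathfrak{p}^{\bot}}$, which under the identification of $\mathcal{M}_q(n-m,0;n+1,n)$ with $\X_{n-m}$ established in Section~\ref{sec:main_sec} is exactly your $\Phi^{-1}$. The verification of the relations --- the dimension count via $\dim(\mathfrak{x}^{\bot}\cap\mathfrak{y}^{\bot})=n-\dim(\mathfrak{x}+\mathfrak{y})$ and the equivalence of $x-y\in\mathfrak{p}+\mathfrak{q}$ with agreement of the restricted functionals --- likewise matches the paper's computation step for step.
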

We note that
if $l=1$, the attenuated space is just an affine space,
and
Theorem \ref{thm:m-flat} implies that
Theorem \ref{thm:main} contains the main result of the
paper \cite{Kurihara2009cto} as a special case.

\begin{proof}[Proof of Theorem \ref{thm:m-flat}]
Let $\mathscr{L}(x_1,x_2,\ldots ,x_r)$ denote the subspace
of $\mathbb{F}_q^{n}$ spanned by
vectors $x_1,x_2,\ldots ,x_r\in \mathbb{F}_q^{n}$.
Let $e_i$ be the $i$-th standard base of $\mathbb{F}_q^n$ with $1$
in the $i$-th component and $0$ elsewhere,
and $(\cdot ,\cdot )$ be the standard non-degenerate symmetric
bilinear form on $\mathbb{F}_q^{n}$
(i.e., $(\sum^{n}_{i=1}c_i e_i,\sum^{n}_{i=1}c'_i e_i)
=\sum^{n}_{i=1}c_i c'_i$).
For a subspace $\mathfrak{p}$ of $\mathbb{F}_q^n$, we define
$\mathfrak{p}^{\bot }=\set{\beta \in \mathbb{F}_q^n}
{(\alpha ,\beta )=0\ (\forall \alpha \in \mathfrak{x})}$.
We regard $\mathbb{F}_q^n\oplus \mathbb{F}_q$
as $\mathbb{F}_q ^{n+1}$
and also regard $\{0\} \oplus \mathbb{F}_q$ as $\mathfrak{e}$.
For $\mathfrak{p}+x$ in $X_n(m)$,
we set $\mathfrak{p}_{x} =\set{(v, (x,v))}
{v\in \mathfrak{p}^{\bot}}$
in $\mathbb{F}_q ^{n+1}$.
Since any $x$ and $x'$ in $\mathfrak{p}+x$ satisfy
$(x,v)=(x',v)$,
the map 
$\Phi : \mathfrak{p}+x \mapsto \mathfrak{p}_{x}$
from $X_n(m)$ to $\mathbb{F}_q ^{n+1}$
is well-defined.
Then we immediately check that
$\dim \mathfrak{p}_{x}
=\dim \mathfrak{p}^{\bot }
=n-m$
and
$\mathfrak{p}_{x} \cap \mathfrak{e} =\{(0,0)\}$,
i.e., $\mathfrak{p}_{x} \in \mathcal{M}_q(n-m,0;n+1,n)$,
and
$\Phi$ is an injective map
from $X_n(m)$ to $\mathcal{M}_q(n-m,0;n+1,n)$.
On the other hand, we have
$|X_n(m)|=|\mathcal{M}_q(n-m,0;n+1,n)| =q^{n-m}\qbinom{n}{m}$.
This means that $\Phi$ is bijective.	

Finally, we check that
for each $\mathfrak{p}+x,\mathfrak{q}+y \in X_n(m)$,
$(\mathfrak{p}+x,\mathfrak{q}+y) \in T_{(i,j)}$ if and only if
$(\mathfrak{p}_x,\mathfrak{q}_y) \in R_{(i,j)}$.
By
\begin{eqnarray*}
\dim
(\mathfrak{p}_{x} +\mathfrak{e})/\mathfrak{e}
\cap
(\mathfrak{q}_{y} +\mathfrak{e})/\mathfrak{e}
&=&
\dim
\mathfrak{p}^{\bot} \cap \mathfrak{q}^{\bot}\\
&=&n-\dim (\mathfrak{p} + \mathfrak{q}),
\end{eqnarray*}
it follows that
$\dim \mathfrak{p} \cap \mathfrak{q}=m-i$
if and only if $\dim
(\mathfrak{p}_{x} +\mathfrak{e})/\mathfrak{e}
\cap
(\mathfrak{q}_{y} +\mathfrak{e})/\mathfrak{e}
=n-m-i$.
Moreover, we have
\begin{eqnarray*}
\dim \mathfrak{p}_{x} \cap \mathfrak{q}_{y}
&=&
\dim \set{(v, (x,v))}{v\in
\mathfrak{p}^{\bot} \cap \mathfrak{q}^{\bot}
\ \text{and}\ 
(x,v)=(y,v)}\\
&=&
\begin{cases}
\dim\fp^\bot\cap\fq^\bot&\text{if $\fp^\bot\cap\fq^\bot\subset
\mathscr{L}(x-y)^\bot$,}\\
\dim\fp^\bot\cap\fq^\bot-1&\text{otherwise}
\end{cases}
\\ &=&
\begin{cases}
n-m-i&\text{if $x-y\in\fp+\fq$,}\\
n-m-i-1&\text{otherwise.}
\end{cases}
\end{eqnarray*}
Therefore the desired result follows.
\end{proof}

\appendix
\section{Some formulas}
\label{sec:formula}
\begin{prop}[cf.~\cite{Dunkl1978ats}]
\label{prop:dunkl}
Let $\mathfrak{p},\mathfrak{q}$ be subspaces of $\mathbb{F}_q^n$
with $\dim \mathfrak{p} =a$,
$\dim \mathfrak{q} =b$,
$\dim \mathfrak{p} \cap \mathfrak{q}= x$.
For $c,y\in \mathbb{Z}$ with $x\le y\le a$,
$b\le c \le n-a+y$, the number of subspaces $\mathfrak{w}$ of
$\mathbb{F}_q^n$ with $\mathfrak{w} \supset \mathfrak{q}$,
$\dim \mathfrak{w} =c$ and 
$\dim \mathfrak{w} \cap \mathfrak{p}= y$ is
\[q^{(a-y)(c-b-y+x)}\qbinom{a-x}{y-x}\qbinom{n-b-a+x}{c-b-y+x}.\]
\end{prop}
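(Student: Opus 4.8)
The plan is to reduce the assertion to a standard enumeration by passing to the quotient space $\overline{V}:=\mathbb{F}_q^n/\mathfrak{q}$, which has dimension $n-b$. First I would use the bijection $\mathfrak{w}\mapsto\overline{\mathfrak{w}}:=\mathfrak{w}/\mathfrak{q}$ between the subspaces of $\mathbb{F}_q^n$ containing $\mathfrak{q}$ and the subspaces of $\overline{V}$; it carries a $c$-dimensional $\mathfrak{w}$ to a $(c-b)$-dimensional $\overline{\mathfrak{w}}$. Writing $\overline{\mathfrak{p}}:=(\mathfrak{p}+\mathfrak{q})/\mathfrak{q}$, which has dimension $a-x$, the key point is the identity $\dim(\overline{\mathfrak{w}}\cap\overline{\mathfrak{p}})=\dim(\mathfrak{w}\cap\mathfrak{p})-x$, valid for every $\mathfrak{w}\supseteq\mathfrak{q}$. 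This follows from the observation that $\mathfrak{w}\cap(\mathfrak{p}+\mathfrak{q})=(\mathfrak{w}\cap\mathfrak{p})+\mathfrak{q}$ when $\mathfrak{q}\subseteq\mathfrak{w}$ (if $p+q'\in\mathfrak{w}$ with $p\in\mathfrak{p}$, $q'\in\mathfrak{q}$, then $p=(p+q')-q'\in\mathfrak{w}$), so that $\overline{\mathfrak{w}}\cap\overline{\mathfrak{p}}=\bigl(\mathfrak{w}\cap(\mathfrak{p}+\mathfrak{q})\bigr)/\mathfrak{q}=\bigl((\mathfrak{w}\cap\mathfrak{p})+\mathfrak{q}\bigr)/\mathfrak{q}$, whose dimension is $\dim(\mathfrak{w}\cap\mathfrak{p})-\dim\bigl((\mathfrak{w}\cap\mathfrak{p})\cap\mathfrak{q}\bigr)=\dim(\mathfrak{w}\cap\mathfrak{p})-x$, using $(\mathfrak{w}\cap\mathfrak{p})\cap\mathfrak{q}=\mathfrak{p}\cap\mathfrak{q}$.

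Hence the quantity to be computed equals the number of $(c-b)$-dimensional subspaces $\mathfrak{u}$ of $\overline{V}$ with $\dim(\mathfrak{u}\cap\overline{\mathfrak{p}})=y-x$, for the fixed $(a-x)$-dimensional subspace $\overline{\mathfrak{p}}$ of the $(n-b)$-dimensional space $\overline{V}$. I would evaluate this in two stages. The intersection $\mathfrak{s}:=\mathfrak{u}\cap\overline{\mathfrak{p}}$ is a $(y-x)$-dimensional subspace of $\overline{\mathfrak{p}}$, and there are $\qbinom{a-x}{y-x}$ ways to choose it. For a fixed such $\mathfrak{s}$, passing to the further quotient $\overline{V}/\mathfrak{s}$ identifies the subspaces $\mathfrak{u}\supseteq\mathfrak{s}$ of dimension $c-b$ with $\mathfrak{u}\cap\overline{\mathfrak{p}}=\mathfrak{s}$ exactly with the $(c-b-y+x)$-dimensional subspaces of $\overline{V}/\mathfrak{s}$ (dimension $n-b-y+x$) that meet $\overline{\mathfrak{p}}/\mathfrak{s}$ (dimension $a-y$) trivially.

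The remaining ingredient is the elementary count: the number of $j$-dimensional subspaces of $\mathbb{F}_q^M$ meeting a fixed $k$-dimensional subspace $\mathfrak{r}$ trivially equals $q^{jk}\qbinom{M-k}{j}$. I would prove this by noting that reduction modulo $\mathfrak{r}$ carries any such subspace isomorphically onto a $j$-dimensional subspace of $\mathbb{F}_q^M/\mathfrak{r}$, and that each of the $\qbinom{M-k}{j}$ possible images is the image of exactly $q^{jk}$ such subspaces, namely the complements of $\mathfrak{r}$ inside its $(j+k)$-dimensional preimage (they are the graphs of the linear maps from a fixed complement to $\mathfrak{r}$, whence $q^{jk}$ of them). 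Applying this with $M=n-b-y+x$, $k=a-y$, $j=c-b-y+x$ gives $q^{(a-y)(c-b-y+x)}\qbinom{n-b-a+x}{c-b-y+x}$ subspaces $\mathfrak{u}$ for each $\mathfrak{s}$, and multiplying by $\qbinom{a-x}{y-x}$ yields the asserted formula.

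Each step is routine; the point requiring the most care is that every reduction preserves \emph{exact} dimensions --- in particular that ``$\mathfrak{u}\cap\overline{\mathfrak{p}}=\mathfrak{s}$'' (not merely $\supseteq\mathfrak{s}$) is precisely what ``meets $\overline{\mathfrak{p}}/\mathfrak{s}$ trivially'' records --- together with the routine check that the hypotheses $x\le y\le a$ and $b\le c\le n-a+y$ keep the Gaussian binomial coefficients in range, both sides vanishing otherwise. So the only genuine work is bookkeeping the dimensions through the two successive quotients.
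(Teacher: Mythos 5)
Your argument is correct. Note that the paper itself offers no proof of Proposition~\ref{prop:dunkl}; it is simply quoted with a reference to Dunkl, so there is nothing to compare against. Your two-stage reduction --- quotient by $\mathfrak{q}$ (using the modular-law identity $\mathfrak{w}\cap(\mathfrak{p}+\mathfrak{q})=(\mathfrak{w}\cap\mathfrak{p})+\mathfrak{q}$ for $\mathfrak{q}\subseteq\mathfrak{w}$ to get $\dim(\overline{\mathfrak{w}}\cap\overline{\mathfrak{p}})=\dim(\mathfrak{w}\cap\mathfrak{p})-x$), then choice of the intersection $\mathfrak{s}$ followed by the standard count $q^{jk}\qbinom{M-k}{j}$ of $j$-spaces meeting a fixed $k$-space trivially --- is the standard way to establish this formula, and the bookkeeping ($M=n-b-y+x$, $k=a-y$, $j=c-b-y+x$) lands exactly on the stated expression.
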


\begin{cor}[cf.~{\cite[Lemma~9.3.2 (iii)]{Brouwer1989dg}}]
\label{lem:BCN}
If $\mathfrak{p}$ is an $a$-dimensional subspace of $\mathbb{F}_q^n$,
then there are precisely
$q^{(a-y)(c-y)}\qbinom{a}{y}\qbinom{n-a}{c-y}$
$c$-dimensional subspaces $\mathfrak{w}$ of $\mathbb{F}_q^n$ with
$\dim \mathfrak{p}\cap \mathfrak{w}=y$.
\end{cor}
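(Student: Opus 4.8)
The plan is to deduce Corollary \ref{lem:BCN} as the degenerate case $\mathfrak{q}=\{0\}$ of Proposition \ref{prop:dunkl}. Take $\mathfrak{q}$ to be the zero subspace of $\mathbb{F}_q^n$, so that $b=\dim\mathfrak{q}=0$ and $x=\dim(\mathfrak{p}\cap\mathfrak{q})=0$. With this choice the requirement $\mathfrak{w}\supset\mathfrak{q}$ is satisfied by every subspace $\mathfrak{w}$ of $\mathbb{F}_q^n$, so the subspaces enumerated by Proposition \ref{prop:dunkl} are precisely the $c$-dimensional subspaces $\mathfrak{w}$ with $\dim\mathfrak{p}\cap\mathfrak{w}=y$, which are exactly those counted in Corollary \ref{lem:BCN}.

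It then remains only to specialize the formula. Substituting $b=0$ and $x=0$ into the count of Proposition \ref{prop:dunkl} gives
\[
q^{(a-y)(c-b-y+x)}\qbinom{a-x}{y-x}\qbinom{n-b-a+x}{c-b-y+x}
=q^{(a-y)(c-y)}\qbinom{a}{y}\qbinom{n-a}{c-y},
\]
which is the asserted number. The admissibility constraints $x\le y\le a$ and $b\le c\le n-a+y$ of Proposition \ref{prop:dunkl} reduce to $0\le y\le a$ and $0\le c\le n-a+y$; for integers $y,c$ outside this range there are no such $\mathfrak{w}$, and the right-hand side also vanishes because one of the Gaussian coefficients then has a negative or too-large upper/lower argument, so the identity holds for all integers $y$ and $c$ under the standard convention on $\qbinom{\cdot}{\cdot}$.

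There is no real obstacle here: the only points needing a word of care are that Proposition \ref{prop:dunkl} genuinely permits $b=0$ (so that taking $\mathfrak{q}=\{0\}$ is legitimate) and that the boundary cases are absorbed by the vanishing convention for out-of-range Gaussian coefficients. One could instead give a self-contained proof by counting flags $\{0\}\subset\mathfrak{p}\cap\mathfrak{w}\subset\mathfrak{w}$ and dividing by overcounting factors, but deducing the statement from Proposition \ref{prop:dunkl} is the shortest route and is the natural step given the organization of the appendix.
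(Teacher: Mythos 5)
Your derivation is correct and is exactly the route the paper intends: the statement is presented as a corollary of Proposition~\ref{prop:dunkl} (with a citation to Brouwer--Cohen--Neumaier in lieu of a written proof), and specializing to $\mathfrak{q}=\{0\}$, i.e.\ $b=x=0$, yields the stated count immediately. Your remarks on the boundary cases and the vanishing convention for out-of-range Gaussian coefficients are sensible but not needed beyond what the paper already implicitly assumes.
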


\begin{prop}
\label{thm:q-bino}
\begin{enumerate}
\item \label{eq:q-bino_change}
\begin{enumerate}
\item $\displaystyle
\qbinom{k}{r}\qbinom{r}{t}
=
\qbinom{k}{t}\qbinom{k-t}{r-t}$
\item $\displaystyle
\qbinom{k}{r}\qbinom{r}{t}
=
\qbinom{k}{r-t}\qbinom{k-r+t}{t}$
\end{enumerate}
\item \label{eq:q-bino_orthogonal}
$\displaystyle 
\sum^{k}_{i=r}(-1)^{k-i}q^{\binom{k-i}{2}}\qbinom{k}{i}\qbinom{i}{r}=
\delta _{k,r}$
\item \label{eq:q-Vandermonde}
\begin{enumerate}
\item $\displaystyle
\qbinom{x+y}{k}=
\sum^{k}_{i=0}q^{(x-i)(k-i)}\qbinom{x}{i}\qbinom{y}{k-i}$
\item $\displaystyle
\qbinom{x+y}{k}=
\sum^{k}_{i=0}q^{i(y-k+i)}\qbinom{x}{i}\qbinom{y}{k-i}$
\end{enumerate}
\item \label{eq:most_important}
\begin{enumerate}
\item $\displaystyle
\qbinom{x}{t}\qbinom{k-x}{r-t}=
\sum^{r}_{i=t}(-1)^{i-t}q^{-(r-t)(x-t)+\binom{i-t}{2}}\qbinom{i}{t}
\qbinom{k-i}{r-i}\qbinom{x}{i}$
\item $\displaystyle
\qbinom{x}{t}\qbinom{k-x}{r-t}=
\sum^{r}_{i=t}(-1)^{i-t}q^{(i-t)(k-x-r+t)+\binom{i-t+1}{2}}
\qbinom{i}{t}\qbinom{k-i}{r-i}\qbinom{x}{i}$
\end{enumerate}
\end{enumerate}
\end{prop}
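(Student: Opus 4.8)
The plan is to treat the four groups in the order (\ref{eq:q-bino_change}), (\ref{eq:q-Vandermonde}), (\ref{eq:q-bino_orthogonal}), (\ref{eq:most_important}), since each rests on its predecessors, and to regard every identity whose upper arguments are not manifestly non-negative integers as a (Laurent-)polynomial identity in $q^{x}$ and $q^{y}$, so that it suffices to verify it at all non-negative integer specializations. For (\ref{eq:q-bino_change}) I would simply expand each Gaussian coefficient as $\qbinom{n}{k}=[n]_q!/([k]_q!\,[n-k]_q!)$ with $[n]_q!=\prod_{i=1}^{n}(q^{i}-1)/(q-1)$; both sides of~(a) collapse to $[k]_q!/([t]_q!\,[r-t]_q!\,[k-r]_q!)$, and both sides of~(b) to the same symmetric expression, so these are purely formal.

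For the $q$-Vandermonde convolution (\ref{eq:q-Vandermonde})~(a) I would appeal directly to Corollary~\ref{lem:BCN}: partitioning the $k$-dimensional subspaces $W$ of $\mathbb{F}_q^{x+y}$ according to $i=\dim(W\cap\mathbb{F}_q^{x})$ and counting each class by that corollary (ambient dimension $x+y$, fixed subspace $\mathbb{F}_q^{x}$, intersection dimension $i$) yields exactly $\sum_i q^{(x-i)(k-i)}\qbinom{x}{i}\qbinom{y}{k-i}$, valid for all non-negative integers $x,y$ and hence identically. The dual form (\ref{eq:q-Vandermonde})~(b) then follows from~(a) by the substitution $q\mapsto q^{-1}$ together with $\qbinom{n}{k}_{q^{-1}}=q^{-k(n-k)}\qbinom{n}{k}_{q}$; clearing the resulting powers of $q$ turns the exponent $(x-i)(k-i)$ into $i(y-k+i)$, which is precisely~(b).

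Identity (\ref{eq:q-bino_orthogonal}) I would obtain from (\ref{eq:q-bino_change})~(a) and the terminating $q$-binomial theorem. Writing $\qbinom{k}{i}\qbinom{i}{r}=\qbinom{k}{r}\qbinom{k-r}{i-r}$ and setting $m=k-r$, $l=i-r$, the alternating sum becomes $\qbinom{k}{r}\sum_{l=0}^{m}(-1)^{l}q^{\binom{l}{2}}\qbinom{m}{l}$ after reindexing and using $\qbinom{m}{m-l}=\qbinom{m}{l}$. This inner sum is the value at $z=1$ of
\[
\prod_{i=0}^{m-1}(1-zq^{i})=\sum_{l=0}^{m}(-1)^{l}q^{\binom{l}{2}}\qbinom{m}{l}z^{l},
\]
the finite $q$-binomial theorem (an easy induction through the $q$-Pascal relation); for $m\ge 1$ the factor $1-q^{0}$ kills the product, while for $m=0$ it equals $1$, so the inner sum is $\delta_{m,0}$ and (\ref{eq:q-bino_orthogonal}) follows.

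The identities (\ref{eq:most_important}) are the substantial part. For~(a) I would apply (\ref{eq:q-bino_change})~(a) in the form $\qbinom{x}{i}\qbinom{i}{t}=\qbinom{x}{t}\qbinom{x-t}{i-t}$ to every summand on the right, cancel the common nonzero factor $\qbinom{x}{t}$, and pull out the $i$-independent power $q^{-(r-t)(x-t)}$; with $a=x-t$, $s=r-t$, $B=k-t$, $j=i-t$ this reduces (\ref{eq:most_important})~(a) to the single summation
\[
q^{sa}\qbinom{B-a}{s}=\sum_{j=0}^{s}(-1)^{j}q^{\binom{j}{2}}\qbinom{a}{j}\qbinom{B-j}{s-j}.
\]
This is where the real work lies and is the main obstacle: because the second factor $\qbinom{B-j}{s-j}$ has both arguments shifted by $j$, the sum is not a single instance of the convolution (\ref{eq:q-Vandermonde}), but is the terminating $q$-Chu--Vandermonde evaluation. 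I would prove it by induction on $s$ using the $q$-Pascal splitting $\qbinom{B-j}{s-j}=\qbinom{B-j-1}{s-j-1}+q^{s-j}\qbinom{B-j-1}{s-j}$ (consistently, one checks via (\ref{eq:q-bino_change})~(a) and (\ref{eq:q-bino_orthogonal}) that the triangular matrix $(\qbinom{B-j}{s-j})_{s,j}$ is inverted by $((-1)^{s-j}q^{\binom{s-j}{2}}\qbinom{B-j}{s-j})_{s,j}$, which recasts the summation in an equivalent form). Finally (\ref{eq:most_important})~(b) would follow from~(a) by the same duality $q\mapsto q^{-1}$ used for (\ref{eq:q-Vandermonde}): a routine tally of exponents sends $-(r-t)(x-t)+\binom{i-t}{2}$ to $(i-t)(k-x-r+t)+\binom{i-t+1}{2}$. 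Everything outside this $q$-Chu--Vandermonde step is either formal manipulation of $q$-factorials or an application of the duality and the building blocks (\ref{eq:q-bino_orthogonal}) and (\ref{eq:q-Vandermonde}).
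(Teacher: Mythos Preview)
Your proposal is correct, and its architecture matches the paper's: parts (\ref{eq:q-bino_change})--(\ref{eq:q-Vandermonde}) are dispatched as standard (the paper simply cites references for (\ref{eq:q-bino_orthogonal}) and (\ref{eq:q-Vandermonde}) where you supply short arguments), and for (\ref{eq:most_important})(a) both you and the paper apply (\ref{eq:q-bino_change})(a) in the form $\qbinom{x}{i}\qbinom{i}{t}=\qbinom{x}{t}\qbinom{x-t}{i-t}$ to reduce everything to the single identity
\[
q^{sa}\qbinom{B-a}{s}=\sum_{j=0}^{s}(-1)^{j}q^{\binom{j}{2}}\qbinom{a}{j}\qbinom{B-j}{s-j},
\]
which in the paper's notation is the Claim $\qbinom{n-p}{m}=\sum_{k}(-1)^{k}q^{-mp+\binom{k}{2}}\qbinom{p}{k}\qbinom{n-k}{m-k}$.

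The genuine difference lies in how this Claim is established. The paper proves it by a negation trick: it invokes the reflection formula $\qbinom{n-p}{m}=(-1)^{m}q^{m(n-p)-\binom{m}{2}}\qbinom{p-n+m-1}{m}$, expands the right-hand side by the $q$-Vandermonde convolution (\ref{eq:q-Vandermonde})(a), and then applies the reflection formula once more to each summand. You instead propose a direct induction via $q$-Pascal. Your route is more elementary and self-contained (it avoids introducing Gaussian coefficients with negative upper index), while the paper's route is slicker and makes transparent why the two companion forms in (\ref{eq:most_important}) arise: they come from feeding (\ref{eq:q-Vandermonde})(a) versus (\ref{eq:q-Vandermonde})(b) through the same reflection sandwich. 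Correspondingly, the paper obtains (\ref{eq:most_important})(b) by repeating its argument with (\ref{eq:q-Vandermonde})(b), whereas you get it from (\ref{eq:most_important})(a) via the $q\mapsto q^{-1}$ duality; both are valid, and your systematic use of duality is a nice unifying device. One small caution: the induction you sketch on $s$ alone does not close up immediately, since the $q$-Pascal split produces a term with the same $s$ and shifted $B$; you will need either to induct on $a$ (which works cleanly in one step) or to carry the statement for all $B$ simultaneously through the induction on $s$.
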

\begin{proof}
(1) Immediate from the definition of the Gaussian coefficient.

(2) and (3) are well known
(cf.~\cite{Corcino2008p}
and \cite[Theorem 3.4]{Andrews1998top}).

(4)
First, to prove Proposition \ref{thm:q-bino} (4) (a) and (b),
we use the following claim.
Remark that the equality between
the first term and the second term
of the claim is also proved by Lv and Wang~\cite{Lv2010eoq}.
\begin{clm}
\[\qbinom{n-p}{m}
=
\sum^{m}_{k=0}(-1)^k q^{-mp+\binom{k}{2}}
\qbinom{p}{k}\qbinom{n-k}{m-k}
=
\sum^{m}_{k=0}(-1)^k q^{(n-m-p)k+\binom{k+1}{2}}
\qbinom{p}{k}\qbinom{n-k}{m-k}\]
\end{clm}
\begin{proof}
There is the following equation (cf.~\cite{Lv2010eoq}):
\begin{equation}
\label{eq:minus_binom}
\qbinom{n-p}{m}
=
(-1)^m q^{m(n-p)-\binom{m}{2}}\qbinom{p-n+m-1}{m}.
\end{equation}
By Proposition \ref{thm:q-bino} (\ref{eq:q-Vandermonde}) (a),
we have
\[
\qbinom{p-n+m-1}{m}
=
\sum^{m}_{k=0}q^{(-n+m-1-k)(m-k)}\qbinom{-n+m-1}{k}\qbinom{p}{m-k}.
\]
Again, we apply (\ref{eq:minus_binom}) for $\qbinom{-n+m-1}{k}$ as
$\qbinom{-n+m-1}{k}
=
(-1)^k q^{k(-n+m-1)-\binom{k}{2}}\qbinom{n-m+k}{k}$.
Consequently,
it follows that
\begin{eqnarray*}
\qbinom{n-p}{m}
&=&
(-1)^m q^{m(n-p)-\binom{m}{2}}
\sum^{m}_{k=0}q^{(-n+m-1-k)(m-k)}\qbinom{p}{m-k}\\
&&\times(-1)^k q^{k(-n+m-1)-\binom{k}{2}}\qbinom{n-m+k}{k}\\
&=&
\sum^{m}_{k=0}
(-1)^{m-k}q^{-m p+\binom{m-k}{2}}
\qbinom{n-m+k}{k}\qbinom{p}{m-k}\\
&=&
\sum^{m}_{k=0}
(-1)^k q^{-m p+\binom{k}{2}}
\qbinom{n-k}{m-k}\qbinom{p}{k}.
\end{eqnarray*}
Similarly, using
Proposition \ref{thm:q-bino} (\ref{eq:q-Vandermonde}) (b)
instead of
Proposition \ref{thm:q-bino} (\ref{eq:q-Vandermonde}) (a),
we obtain
$\qbinom{n-p}{m}
=\sum^{m}_{k=0}(-1)^k q^{(n-m-p)k+\binom{k+1}{2}}
\qbinom{p}{k}\qbinom{n-k}{m-k}$.
\end{proof}
Let us return to the proof of
Proposition \ref{thm:q-bino} (\ref{eq:most_important}) (a).
Using the above claim,
we have
\begin{eqnarray*}
\qbinom{k-x}{r-t} &=& \qbinom{(k-t)-(x-t)}{r-t}\\
&=&
\sum^{r-t}_{i'=0}(-1)^{i'}q^{-(r-t)(x-t)+\binom{i'}{2}}
\qbinom{k-t-i'}{r-t-i'}\qbinom{x-t}{i'}\\
&=&
\sum^{r}_{i=t}(-1)^{i-t}q^{-(r-t)(x-t)+\binom{i-t}{2}}
\qbinom{k-i}{r-i}\qbinom{x-t}{i-t}.
\end{eqnarray*}
Therefore it follows that
\begin{eqnarray*}
\qbinom{x}{t}\qbinom{k-x}{r-t}
&=&
\sum^{r}_{i=t}(-1)^{i-t}q^{-(r-t)(x-t)+\binom{i-t}{2}}
\qbinom{k-i}{r-i}\qbinom{x}{t}\qbinom{x-t}{i-t}\\
&=&
\sum^{r}_{i=t}(-1)^{i-t}q^{-(r-t)(x-t)+\binom{i-t}{2}}
\qbinom{k-i}{r-i}\qbinom{x}{i}\qbinom{i}{t}.
\end{eqnarray*}
In the last line,
we used
Proposition \ref{thm:q-bino} (\ref{eq:q-bino_change}) (a).
Therefore the desired result follows.
Similarly, we can prove
Proposition \ref{thm:q-bino} (\ref{eq:most_important}) (b).
\end{proof}
Note that Proposition \ref{thm:q-bino} (\ref{eq:most_important})
implies a $q$-analogue of the identity (4.26)
in Delsarte~\cite{Delsarte1973aat}.

\begin{lem}
\label{lem:eberlein}
\[
\sum^{m}_{k=0}\qbinom{m-k}{t}E_k(n,m;q;x)=
q^{(m-t)x}\qbinom{m-x}{m-t}\qbinom{n-t-x}{m-t}
\]
\end{lem}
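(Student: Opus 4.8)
The plan is to expand the left-hand side by the definition of the generalized Eberlein polynomials, interchange the two summations, and collapse the resulting inner sum by $q$-binomial orthogonality. The $x$-dependent factors are simply carried along, so the identity is really a statement about a single alternating $q$-binomial sum.

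First I would substitute
\[
E_k(n,m;q;x)=\sum^{k}_{j=0}(-1)^{k-j}q^{jx+\binom{k-j}{2}}\qbinom{m-j}{m-k}\qbinom{m-x}{j}\qbinom{n-m+j-x}{j}
\]
into $\sum_{k=0}^{m}\qbinom{m-k}{t}E_k(n,m;q;x)$ and swap the order of summation so that $j$ ranges over $0\le j\le m$ on the outside and $k$ over $j\le k\le m$ on the inside. Pulling out the factor $q^{jx}\qbinom{m-x}{j}\qbinom{n-m+j-x}{j}$, which does not depend on $k$, the left-hand side becomes $\sum_{j=0}^{m}q^{jx}\qbinom{m-x}{j}\qbinom{n-m+j-x}{j}\,S_j$ with
\[
S_j=\sum_{k=j}^{m}(-1)^{k-j}q^{\binom{k-j}{2}}\qbinom{m-j}{m-k}\qbinom{m-k}{t}.
\]

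Next I would evaluate $S_j$. Writing $\qbinom{m-j}{m-k}=\qbinom{m-j}{k-j}$ and reindexing by $a=k-j$ gives $S_j=\sum_{a=0}^{m-j}(-1)^{a}q^{\binom{a}{2}}\qbinom{m-j}{a}\qbinom{m-j-a}{t}$, and Proposition \ref{thm:q-bino} (\ref{eq:q-bino_change}) (a) (applied twice, i.e.\ the ``subset of a subset'' identity) yields $\qbinom{m-j}{a}\qbinom{m-j-a}{t}=\qbinom{m-j}{t}\qbinom{m-j-t}{a}$, so that $S_j=\qbinom{m-j}{t}\sum_{a=0}^{m-j}(-1)^{a}q^{\binom{a}{2}}\qbinom{m-j-t}{a}$. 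The remaining alternating sum is, after the substitution $a=(m-j-t)-i$, precisely the $r=0$ instance of the orthogonality relation Proposition \ref{thm:q-bino} (\ref{eq:q-bino_orthogonal}), hence equal to $\delta_{m-j-t,\,0}$; note that the upper limit $m-j$ may be replaced by $m-j-t$ since $\qbinom{m-j-t}{a}$ vanishes for $a>m-j-t$. Therefore $S_j=\qbinom{m-j}{t}\,\delta_{j,\,m-t}$, and the outer sum over $j$ collapses to its single surviving term $j=m-t$, which equals $q^{(m-t)x}\qbinom{m-x}{m-t}\qbinom{n-t-x}{m-t}$ because $\qbinom{t}{t}=1$ and $n-m+(m-t)-x=n-t-x$. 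This is the right-hand side.

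The computation is entirely elementary; the only real obstacle is the bookkeeping. One must keep careful track of the index shifts and, in particular, verify that the sign $(-1)^{a}$ and the exponent $\binom{a}{2}$ appearing in $S_j$ genuinely match those in Proposition \ref{thm:q-bino} (\ref{eq:q-bino_orthogonal}) (with $r=0$) after the reindexing, rather than a superficially similar but inequivalent sum. No range or convergence issues arise, and since the $x$-dependent binomials are never manipulated, the identity holds as a polynomial identity in $q^{-x}$.
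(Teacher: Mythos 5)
Your proof is correct and follows essentially the same route as the paper: expand by the definition of $E_k(n,m;q;x)$, interchange the summations, and collapse the inner sum over $k$ via the $q$-binomial orthogonality of Proposition \ref{thm:q-bino} (\ref{eq:q-bino_orthogonal}). The only cosmetic difference is that you first factor out $\qbinom{m-j}{t}$ with the subset-of-subset identity and invoke the $r=0$ case of the orthogonality relation, whereas the paper applies the orthogonality relation directly with $r=t$ (via $i=m-k$) to get $\delta_{m-j,t}$ in one step.
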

\begin{proof}
By the definition of the generalized Eberlein polynomial,
the left hand side is written in
\begin{equation}
\label{eq:Ebertrans}
\sum^{m}_{k=0}\qbinom{m-k}{t}
\sum^{k}_{j=0}(-1)^{k-j}q^{j x+\binom{k-j}{2}}
\qbinom{m-j}{m-k}\qbinom{m-x}{j}\qbinom{n-m+j-x}{j}.
\end{equation}
Interchanging the first summation and the second summation of
(\ref{eq:Ebertrans}),
and using Proposition
\ref{thm:q-bino} (\ref{eq:q-bino_orthogonal}),
it follows that (\ref{eq:Ebertrans}) is equal to
\begin{eqnarray*}
\sum^{m}_{j=0}q^{j x}\qbinom{m-x}{j}\qbinom{n-m+j-x}{j}
\sum^{m}_{k=j}(-1)^{k-j}q^{\binom{k-j}{2}}\qbinom{m-k}{t}
\qbinom{m-j}{m-k}
\\
=q^{(m-t)x}\qbinom{m-x}{m-t}\qbinom{n-t-x}{m-t}.
\end{eqnarray*}
Therefore the desired result follows.
\end{proof}
Note that Lemma \ref{lem:eberlein}
implies a $q$-analogue of the identity (36)
in Tarnanen-Aaltonen-Goethals~\cite{Tarnanen1985njs}.

\begin{lem}
\label{lem:relation_of_krawtchouk}
\begin{equation}
\label{eq:lem_kraw}
\sum^{r\wedge(u-v)}_{j=0\vee (v-r)}q^{j(v-r+j)}
\qbinom{u-v}{j}\qbinom{v}{r-j}K_s(r,l;q;j)
=\qbinom{u-s}{r-s}K_s(u,l;q;u-v)
\end{equation}
\end{lem}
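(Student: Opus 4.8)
The plan is to expand the generalized Krawtchouk polynomial $K_s(r,l;q;j)$ appearing on the left-hand side of (\ref{eq:lem_kraw}) by its definition and then interchange the order of summation. Using
\[
K_s(r,l;q;j)=\sum^{s}_{a=0}(-1)^{s-a}q^{a l+\binom{s-a}{2}}\qbinom{r-a}{r-s}\qbinom{r-j}{a},
\]
the left-hand side of (\ref{eq:lem_kraw}) becomes
\[
\sum^{s}_{a=0}(-1)^{s-a}q^{a l+\binom{s-a}{2}}\qbinom{r-a}{r-s}
\sum_{j}q^{j(v-r+j)}\qbinom{u-v}{j}\qbinom{v}{r-j}\qbinom{r-j}{a},
\]
where the inner sum may be extended over all integers $j$ since the Gaussian coefficients vanish outside the range $0\vee(v-r)\le j\le r\wedge(u-v)$.

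First I would evaluate the inner sum over $j$. Applying Proposition \ref{thm:q-bino} (\ref{eq:q-bino_change}) (a) in the form $\qbinom{v}{r-j}\qbinom{r-j}{a}=\qbinom{v}{a}\qbinom{v-a}{(r-a)-j}$, the inner sum equals
\[
\qbinom{v}{a}\sum_{j}q^{j\left((v-a)-(r-a)+j\right)}\qbinom{u-v}{j}\qbinom{v-a}{(r-a)-j},
\]
which is exactly the $q$-Vandermonde identity of Proposition \ref{thm:q-bino} (\ref{eq:q-Vandermonde}) (b) with $(x,y,k)=(u-v,\ v-a,\ r-a)$; hence the inner sum is $\qbinom{v}{a}\qbinom{u-a}{r-a}$. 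Consequently the left-hand side of (\ref{eq:lem_kraw}) equals
\[
\sum^{s}_{a=0}(-1)^{s-a}q^{a l+\binom{s-a}{2}}\qbinom{r-a}{r-s}\qbinom{v}{a}\qbinom{u-a}{r-a}.
\]

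It remains to identify this with the right-hand side. Expanding $K_s(u,l;q;u-v)$ by its definition and multiplying by $\qbinom{u-s}{r-s}$ gives
\[
\qbinom{u-s}{r-s}K_s(u,l;q;u-v)=\sum^{s}_{a=0}(-1)^{s-a}q^{a l+\binom{s-a}{2}}\qbinom{u-s}{r-s}\qbinom{u-a}{u-s}\qbinom{v}{a},
\]
so it suffices to check the term-by-term identity $\qbinom{r-a}{r-s}\qbinom{u-a}{r-a}=\qbinom{u-s}{r-s}\qbinom{u-a}{u-s}$ for $0\le a\le s$. Both products are rewritten by Proposition \ref{thm:q-bino} (\ref{eq:q-bino_change}) (a): the left side becomes $\qbinom{u-a}{r-s}\qbinom{u-a-r+s}{s-a}$ and the right side becomes $\qbinom{u-a}{r-s}\qbinom{u-a-r+s}{u-r}$, and these agree by the symmetry $\qbinom{N}{K}=\qbinom{N}{N-K}$ together with $(u-a-r+s)-(s-a)=u-r$. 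This proves the lemma. All of the manipulations are routine once the right instances of Proposition \ref{thm:q-bino} are selected; the only points requiring attention are checking that the summation limits really do permit the $q$-Vandermonde identity to be applied over its full index set (so that no boundary terms are dropped) and correctly matching the quadratic exponent $q^{j(v-r+j)}$ to the form $q^{i(y-k+i)}$ demanded by Proposition \ref{thm:q-bino} (\ref{eq:q-Vandermonde}) (b). I expect this bookkeeping, rather than any conceptual difficulty, to be the main thing to be careful about.
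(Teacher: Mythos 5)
Your proof is correct and follows essentially the same route as the paper's: expand $K_s(r,l;q;j)$ by definition, split $\qbinom{v}{r-j}\qbinom{r-j}{a}$ via Proposition~\ref{thm:q-bino}~(\ref{eq:q-bino_change})~(a), evaluate the inner sum by the $q$-Vandermonde identity (\ref{eq:q-Vandermonde})~(b) to get $\qbinom{v}{a}\qbinom{u-a}{r-a}$, and then reassemble the Gaussian coefficients. The only cosmetic difference is that the paper finishes with a single application of (\ref{eq:q-bino_change})~(b) where you use (\ref{eq:q-bino_change})~(a) twice together with the symmetry of the Gaussian coefficient.
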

\begin{proof}
By the definition of the generalized Krawtchouk polynomial
and Proposition \ref{thm:q-bino} (\ref{eq:q-bino_change}) (a),
the left hand side of (\ref{eq:lem_kraw}) is written in
\[
\sum^{s}_{t=0}(-1)^{s-t}q^{t l+\binom{s-t}{2}}\qbinom{r-t}{r-s}
\sum^{r\wedge(u-v)}_{j=0\vee (v-r)}q^{j(v-r+j)}\qbinom{u-v}{j}
\qbinom{v}{t}\qbinom{v-t}{r-j-t}.
\]
By Proposition \ref{thm:q-bino} (\ref{eq:q-Vandermonde}) (b),
we obtain
\[
\sum^{r\wedge(u-v)}_{j=0\vee (v-r)}q^{j(v-r+j)}\qbinom{u-v}{j}
\qbinom{v-t}{r-j-t}=\qbinom{u-t}{r-t}.
\]
Using Proposition \ref{thm:q-bino} (\ref{eq:q-bino_change}) (b),
the left hand side of (\ref{eq:lem_kraw})
is equal to 
\[
\qbinom{u-s}{r-s}\sum^{s}_{t=0}(-1)^{s-t}q^{t l+\binom{s-t}{2}}
\qbinom{u-t}{u-s}\qbinom{u-(u-v)}{t}.
\]
Therefore the desired result follows.
\end{proof}

\noindent
\textbf{Acknowledgments.} 
The author is supported by the fellowship of the Japan Society for
the Promotion of Science. 
The author would like to thank Eiichi Bannai,
Akihiro Munemasa
and Kaishun Wang
for useful discussions and comments.


\begin{thebibliography}{10}
\bibitem{Andrews1998top}
G.~E.~Andrews.
\newblock {\em The Theory of Partitions}.
\newblock Cambridge University Press, 1998.

\bibitem{Bannai1990ctoa}
E.~Bannai.
\newblock Character tables of commutative association schemes.
\newblock In {\em Finite geometries, buildings, and related topics $(${P}ingree
  {P}ark, {CO}, 1988$)$}, Oxford Sci. Publ., pages 105--128. Oxford Univ. Press,
  New York, 1990.

\bibitem{Bannai1984aci}
E.~Bannai and T.~Ito.
\newblock {\em Algebraic Combinatorics. {I}}.
\newblock The Benjamin/Cummings Publishing Co. Inc., Menlo Park, CA, 1984.
\newblock Association schemes.

\bibitem{Brouwer1989dg}
A.~E. Brouwer, A.~M. Cohen, and A.~Neumaier.
\newblock {\em Distance-Regular Graphs}, volume~18 of {\em Ergebnisse der
  Mathematik und ihrer Grenzgebiete $($3$)$ $[$Results in Mathematics and
  Related Areas $($3$)$$]$}.
\newblock Springer-Verlag, Berlin, 1989.

\bibitem{Corcino2008p}
R.~B.~Corcino.
\newblock On {$p,q$}-binomial coefficients.
\newblock {\em Integers. Electronic Journal
of Combinatorial Number Theory}, (8):\#A29, 16, 2008.

\bibitem{Delsarte1973aat}
P.~Delsarte.
\newblock An algebraic approach to the association schemes of coding theory.
\newblock {\em Philips Res. Rep. Suppl.}, (10):vi+97, 1973.

\bibitem{Delsarte1976paa}
P.~Delsarte.
\newblock Properties and applications of the recurrence
  {$F(i+1,k+1,n+1)=q\sp{k+1}F(i,k+1,$} {$n)-q\sp{k}F(i,$} {$k,$} {$n)$}.
\newblock {\em SIAM J. Appl. Math.}, 31(2):262--270, 1976.

\bibitem{Delsarte1978bfo}
P.~Delsarte.
\newblock Bilinear forms over a finite field, with applications to coding
  theory.
\newblock {\em J. Combin. Theory Ser. A}, 25(3):226--241, 1978.

\bibitem{Dunkl1978ats}
C.~F. Dunkl.
\newblock An addition theorem for some {$q$}-{H}ahn polynomials.
\newblock {\em Monatsh. Math.}, 85(1):5--37, 1978.

\bibitem{Kurihara2009cto}
H.~Kurihara.
\newblock Character tables of affine $m$-flat association schemes.
\newblock to appear in Advances in Geometry.

\bibitem{Lv2010eoq}
B.~Lv and K.~Wang.
\newblock The eigenvalues of $q$-Kneser graphs.
\newblock to appear in Discrete Mathematics.

\bibitem{Tarnanen1985njs}
H.~Tarnanen, M.~J. Aaltonen, and J.-M. Goethals.
\newblock On the nonbinary {J}ohnson scheme.
\newblock {\em European J. Combin.}, 6(3):279--285, 1985.

\bibitem{Wang2009asb}
K.~Wang, J.~Guo, and F.~Li.
\newblock Association schemes based on attenuated spaces.
\newblock {\em European Journal of Combinatorics}, In Press, Corrected Proof:1
  -- 9, 2009.

\bibitem{Zhu1997coa}
X.~L. Zhu and F.~G. Li.
\newblock A construction of association schemes with several associative
  classes and of {PBIB} designs using {$m$}-flats in finite affine geometries.
\newblock {\em Acta Math. Appl. Sinica}, 20(1):155--158, 1997.

\end{thebibliography}
\end{document}